\theoremstyle{plain}
\newtheorem{theorem}{Theorem}[section]
\newtheorem{lemma}{Lemma}[section]
\numberwithin{equation}{section}
\newtheorem{assumption}{Assumption}[section]
\theoremstyle{definition}
\newtheorem{rem}{Remark}
\newcommand{\algns}[1]{\begin{align*} #1 \end{align*}}
\newcommand{\algnd}[1]{\begin{aligned} #1 \end{aligned}}
\newcommand{\eqn}[1]{\begin{equation} #1 \end{equation}}
\newcommand{\eqns}[1]{\begin{equation*} #1 \end{equation*}}
\newcommand{\ass}[2]{\begin{equation} \tag{A.#1}#2 \end{equation}}
\newcommand{\setof}[1]{\left\{ #1 \right\} }	
\newcommand{\sothat}{\,:\,}
\newcommand{\dual}[1]{#1^{'}}
\newcommand{\adjoint}[1]{#1^*}
\newcommand{\mat}[1]{\mathsf{#1}}
\newcommand{\T}{\mathcal{T}}
\newcommand{\N}{\mathcal{N}}
\newcommand{\inv}[1]{#1^{-1}}
\newcommand{\grad}{\nabla}
\newcommand{\intd}{\mathrm{d}}	
\newcommand{\pd}{\partial}
\newcommand{\norm}[1]{\left\Vert#1\right\Vert}
\newcommand{\norw}[2]{\left\Vert#1\right\Vert_{#2}}
\newcommand{\inner}[2]{\left( #1, #2\right)}
\newcommand{\reals}{\mathbb{R}}		
\begin{document}
	\title[]{Multigrid Methods for Discrete Fractional Sobolev Spaces}\thanks{Thanks}
	\author{Trygve B\ae rland$^\dagger$}
	\email{trygveba@math.uio.no}
	\address{$^\dagger$Department of Mathematics, University of Oslo, Blindern, Oslo, 0316 Norway}
	
	\author{Miroslav Kuchta$^\dagger$}
	\email{mirok@math.uio.no}
	
	\author{Kent-Andre Mardal$^\dagger$}
	\email{kent-and@math.uio.no}

	\begin{abstract} 
		Coupled multiphysics problems often give rise to interface conditions naturally formulated in fractional Sobolev spaces. Here, both positive- and negative fractionality are common. When designing efficient solvers for discretizations of such problems it would then be useful to have a preconditioner for the fractional Laplacian.
		In this work, we develop an additive multigrid preconditioner for the fractional Laplacian with positive fractionality, and show a uniform bound on the condition number. 
		For the case of negative fractionality, we re-use the preconditioner developed for the positive fractionality and left-right multiply a regular Laplacian with a preconditioner with positive fractionality to obtain the desired
negative fractionality. Implementational issues are outlined in details as the differences between the discrete operators and their corresponding matrices must be addressed when realizing these algorithms in code.  We finish with some numerical experiments verifying the theoretical findings.
	\end{abstract}
	\maketitle
	
	\section{Introduction}
        \label{sec:intro}
	
	Multiphysics or multiscale problems often involve coupling conditions
	at interfaces which are manifolds of lower dimensions. The coupling 
	conditions are, because of the lower dimensionality,  naturally
	posed in fractional Sobolev spaces, and this fact seemingly complicates   
	discretization schemes and solution algorithms. Our focus here will be on the
	development of solution algorithms in terms of multilevel preconditioners that
	from an implementational point of view only require minor adjustments of 
	standard multilevel algorithms.  
	
	As simplified 
	examples of problems involving interface conditions,  let us consider the following
	two prototype problems. First an elliptic problem with a trace constraint 
	\eqn{
		\label{frac:model1}
		\algnd{
			-\Delta u + T^*\lambda &=& f, \quad x \in \Omega, \\  
			T u &=& g, \quad x \in \Gamma ,  
		}
	}  
	and second an elliptic problem in mixed form with a trace constraint
	\eqn{
		\label{frac:model2}
		\algnd{
			u - \nabla p + T^*\lambda &=& f, \quad x \in \Omega, \\  
			\nabla \cdot u &=& g, \quad x \in \Omega, \\ 
			T u &=& h, \quad x \in \Gamma .
		}
	}
	Here, $\Gamma$ is a sub-manifold either within $\Omega$ or at its boundary, 
	$T$ is a trace operator and $T^*$ its adjoint. Both 
	problems are assumed to be equipped with suitable boundary conditions. 
	We remark that although these problems are single physics problems, they may easily be coupled to other
	problems through the Lagrange multiplier at the interface. As such, the problems represent well the
	challenge of handling the interface properly in a multiphysics setting.  
	
	
	We may write the above problems as 
	\[  
	\left(
	\begin{array}{cc}
	-\Delta & T^*  \\
	T & 0 \end{array} 
	\right)
	\left(
	\begin{array}{c}
	u  \\
	\lambda 
	\end{array} 
	\right)
	= 
	\left(
	\begin{array}{c}
	f  \\
	g 
	\end{array} 
	\right)
	\quad\mbox{ and }\quad  
	\left(
	\begin{array}{ccc}
	I & -\nabla & T^*  \\
	\nabla\cdot & 0 & 0  \\
	T & 0  & 0 \end{array} 
	\right)
	\left(
	\begin{array}{c}
	u  \\
	p  \\
	\lambda 
	\end{array} 
	\right)
	= 
	\left(
	\begin{array}{c}
	f  \\ g  \\ h 
	\end{array} 
	\right) . 
	\]
	A crucial challenge is to discretize and solve these problems in a scalable way such that the computations scales linearly with 
	the number of unknowns. Our approach here is to consider iterative methods 
	and develop preconditioners that are both spectrally equivalent with the involved operators 
	and of order-optimal complexity. The main difficulty is the handling of the Lagrange multiplier which 
	falls outside the scope of  standard multilevel methods. To provide a general framework, we will consider preconditioners constructed in 
	terms of the so-called  operator preconditioning approach~\cite{mardal2011preconditioning} to be used 
	for iterative methods. As will be explained later, the block diagonal preconditioners constructed by 
	this technique will be of the following form:   
	\[  
	\left(
	\begin{array}{cc}
	-\Delta^{-1} & 0  \\
	0 & (-\Delta)^{-1/2} \end{array} 
	\right)
	\quad\mbox{ and }\quad  
	\left(
	\begin{array}{ccc}
	(I - \nabla\nabla\cdot)^{-1} & 0 & 0  \\
	0 & I & 0  \\
	0 & 0  & (-\Delta)^{1/2} \end{array} 
	\right),
	\]
	respectively.
	Multilevel methods spectrally equivalent with both $(-\Delta)^{-1}$ and $(I - \nabla\nabla\cdot)^{-1}$ are well known.  
	The challenging part in both cases is the construction of efficient preconditioning algorithms 
	that approximate the inverse of the fractional Laplace problems on 
	the form 
	\begin{equation}
	\label{frac:lap}
	(-\Delta)^s u = f, \quad x \in \Gamma
	\end{equation}
	with $s=1/2$ and $s=-1/2$, equipped with suitable boundary conditions. 
	Furthermore, if $\Gamma$ is of codimension 2, numerical simulations~\cite{kuchta2016preconditioning}
	indicates that $s\in (-0.2, -0.1)$ gives rise to efficient preconditioners. In this 
	paper we therefore consider methods for $s \in [-1,1]$.

	There are many examples of applications of fractional Laplacians in the litterature
	and we mention a few that motivates this work. 
	For non-overlapping domain decomposition
	preconditioner are studied in \cite{arioli2012discrete}, \cite{kocvara2016constraint}. Here, they use
	\eqref{frac:lap} with $s=\tfrac{1}{2}$ to precondition the interface problem
	involving the related Steklov-Poincar{\' e} operator. In \cite{kuchta2016efficient}
	the authors use \eqref{frac:lap} with $s=-\tfrac{1}{2}$ as part of a block
	diagonal preconditioner for multiphysics
	problem where the constraint coupling two domains of different topological
	dimension is enforced by the Lagrange multiplier. Therein the fractionality $s$
	is dictated by the mapping properties of the Schur complement operator. Some
	further examples of coupled systems with domains of different dimensionality
	include Babu{\v s}ka's problem for enforcing Dirichlet boundary
	conditions on an elliptic operator \cite{babuvska1973finite}, 
	flow stabilization by removal of tangential velocity at the boundary 
	through Lagrange multipliers~\cite{BERTOLUZZA201758}, 
	the no-slip condition on the surface of a falling solid in the Navier-Stokes fluid~\cite{court2015fictitious},
	inextensibility constraint in the complex model of vesicle formation~\cite{aland2014diffuse}, and
	the potential jump on a membrane of a cardiac cell~\cite{tveito2017cell}. We note that in these
	applications the fractional Laplace problem has to be solved with both positive
	and negative exponent.
	
	

	There are several alternative approaches that have been used in order to approximate fractional Laplacians. 
	Polynomial approximations of discretizations of $A^s$, where $A$ is a discrete Laplacian, can be computed with the standard Krylov
	subspace methods. However, without any preconditioner a Krylov subspace of large
	dimension is required for convergence, see e.g. Lanczos method in \cite[Section 4]{knizhnerman2010new}.  
	We note that if \eqref{frac:lap} is used to form a preconditioner the approximation
	of $A^s$  can be less accurate, and in \cite{arioli2012discrete} (generalized) Lanczos method
	is used to construct an efficient preconditioner. Therein the efficiency is
	due to a small size of the subspace. In \cite{yang2011novel} Lanczos method with a
	preconditioner based on the invariant subspace of the $k$ smallest eigenvalues
	is proposed for solving the fractional heat equation. The method is shown to
	generate non-polynomial approximations of $A^s$. The contour integral method
	of \cite{hale2008computing} and the extended Krylov method of \cite{knizhnerman2010new} are then related to
	rational function approximations of  $A^s$, while \cite{harizanov2016optimal} consider the best
	uniform rational aproximations of the trasformed function $A \mapsto A^{\beta-s}$.
	In general, the approximation properties of these methods depend on the condition number
	of $A$ and
	thus computations of extremal eigenvalues are often part of the algorithm. Further,
	computational complexity of the methods depends on efficient solvers for auxiliary
	linear systems, e.g. $(A-q_kI)x=b$ in \cite{harizanov2016optimal} where $q_k\in\reals$ is a
	shift parameter. Almost mesh independent preconditioners for systems arising in
	\cite{hale2008computing} and \cite{knizhnerman2010new} are discussed in \cite{burrage2012efficient}.
	An alternative approach to the matrix transfer method is presented in \cite{bonito2015numerical}
	where the inverse of the fractional Laplacian is defined via the (integral) Balakrishnan
	formula \cite{balakrishnan1960fractional}. 

	Multilevel methods have been considered for fractional Laplacians have been considered
	in \cite{bramble2000computational, oswald1998multilevel, harizanov2016optimal}, but there
	seems to be a significant untapped potential for advancement. Our
	work here is closely related to  \cite{bramble2000computational}, where order-optimal preconditioners for $A^s$ when $s\in\left(-\frac{3}{2}, \frac{3}{2} \right)$ were constructed 
	using an hierarchical basis approach. The paper did, however, only consider smoothers based
	on level-dependent scaling and did not put much focus on the actual implementation. 
	Here, we will develop and analyze a multilevel algorithm that is straightforward
	to implement in a standard multilevel software framework. In fact, 
	the main change required is an adjustment of the smoothers. To illustrate
	the change, let us assume that we want to solve the system $\mat{A}\mat{x} = \mat{b}$, where
	$\mat{A}$ is a stiffness matrix corresponding to a discretized Laplacian.     
	A standard Jacobi algorithm can then be written 
	\[
	\mat{x}^{n+1}_i = \mat{x}^{n}_{i} - \frac{1}{\mat{A}_{i,i}}\mat{r}_i^n,  
	\]
	where $\mat{A}_{i,i}$ are the diagonal entries of the stiffness matrix for a 
	discretized Laplacian, and $\mat{r}^n$ is the residual of the $n$'th iterate, $\mat{x}^n$ . 
	In our case, for $\mat{A}^s\mat{x} = \mat{b}$,
	the proposed Jacobi smoother 
	may be implemented as
	\[
	\mat{x}^{n+1}_i = \mat{x}^n_{i} - \left(\frac{1}{\mat{M}_{i,i}^{1-s}\mat{A}_{i,i}^s}\right) \mat{r}^n_i,  .  
	\]
	Here, $\mat{M}_{i,i}$ are the diagonal entries of the mass matrix. 
	We notice here that for $s=0$ the action is a Jacobi iteration on
	the mass matrix, for $s=1$ the action is a Jacobi iteration on 
	the stiffness matrix, and for $0<s<1$ the action is an interpolation
	between these two extremes. 
	From an implementational point of view, the restriction and interpolation operators used are the same as those used in
	standard multilevel algorithms. However, from a theoretical point of view, 
	the fact that we use standard restriction and interpolation operators, means
	that the multilevel approach will be non-nested.  In fact, the matrices on 
	coarser levels do not correspond to $(-\Delta)^s$-Galerkin projections 
	of the matrix on the finer levels. We therefore employ the framework
	of non-nested multilevel methods~\cite{bramble1991analysis}. Furthermore, 
	a multiplicative multilevel algorithm would require computing the residual and hence the
	evaluation of the exact $(-\Delta)^{-s}$ operator on every level. Since the 
	evaluation of the exact $(-\Delta)^{-s}$ is a computationally expensive procedure, 
	we instead rely on the additive multilevel algorithm proposed in \cite{bramble1990parallel}, where
	the same residual is used on all levels. The additive variant is significantly
	less efficient than corresponding multiplicative variants in terms of the 
	conditioning in the sense that the conditioning depends on the number of levels. 
	Still, this is a small price to pay (only logarithmic in the number of unknowns) 
	to avoid exact evaluation of the residual.  
	In this paper we will assume quasi-uniform mesh and continuous piecewise linear finite elements. This is mainly for simplicity, and the results can be generalized to higher order discretizations, as well as discontinuous Galerkin methods.
	
	The paper is structured as follows. In Section \ref{sec:preliminaries}, we introduce notation, and some useful operator inequalities related to fractional powers of positive operators. We also give a brief 
	discussion of fractional Sobolev spaces, as well as of some implementational concerns. Section \ref{sec:abstract-bpx} is devoted to the analysis of an abstract multilevel framework. In section \ref{sec:discrete-bpx} we use this framework to define operators that are spectrally equivalent to the inverse of the fractional Laplacian when the fractionality $s \geq 0$. We discuss some strategies for preconditioning when $s<0$ in section \ref{sec:preconditioning-negative-s}, and in section \ref{sec:implementation} we discuss implementation of the preconditioners developed in the previous sections.  Finally, we provide numerical results that verify our theoretical result in section \ref{sec:numerical-experiments}.
	
	\section{Notation and preliminaries}
	\label{sec:preliminaries}
	
	Let $\Omega$ be a bounded, Lipschitz domain in $\reals^n$, with boundary $\pd \Omega$. We denote by $L^2(\Omega)$ the space of square integrable functions over $\Omega$, with inner product  $\inner{\cdot}{\cdot}$ and norm $\norm{\cdot}$. For  $k \in \mathbb{N}$, we denote by $H^k(\Omega)$ the usual Sobolev spaces of functions in $L^2(\Omega)$ with all derivatives up to order $k$ in $L^2(\Omega)$. 
	The norm and inner product in $H^k$ is denoted by $\|\cdot\|_k$ and $(\cdot, \cdot)_k$, respectively. 
	The closure in $H^k$ of smooth functions with compact support in $\Omega$ is denoted as $H^k_0(\Omega)$ and its dual
	space is $H^{-k}$. In general a Hilbert space $X$ is equipped with a norm $\|\cdot\|_X$ and an inner product $(\cdot, \cdot)_X$ 
	and the dual space is denoted $\dual{X}$.  
	
	Let $A$ be a symmetric positive-definite operator on a finite-dimensional Hilbert space $X$ with dimension $N$. Denote by $\setof{(\lambda_k,\phi_k)}_{k=1}^N$ the set of eigenpairs of $A$, normalized so that
	\eqns{
		\inner{\phi_k}{\phi_l}_X = \delta_{k,l},
	}
	where $\delta_{k,l}$ is the Kronecker delta.  Then $\phi_k$, for $k=1,\ldots,N$ forms an orthonormal basis of $X$, and if $u \in X$ has the representation $u = \sum_{k=1}^N c_k \phi_k$, then
	
	\eqns{
		Au = \sum_{k=1}^N \lambda_k c_k \phi_k.
	}
	For $s\in \reals$, we define the fractional power $A^s$ of $A$ by
	\eqns{
		A^s u = \sum_{k=1}^N \lambda_k^s c_k \phi_k.
	}
	If $A$ is only positive semi-definite, then we must restrict to $s \geq 0$, and the eigenvectors corresponding to the nullspace of $A$ are left out (also for $s=0$).  
	If $B$ is another symmetric positive semi-definite operator on $X$, we write $A \leq B$ if for every $u \in X$
	\eqns{
		\inner{Au}{u}_X \leq \inner{Bu}{u}_X
	}
	holds. Note that $A \geq 0$ is equivalent to saying that $A$ is positive semi-definite. 
	
	A result in operator theory is the Löwner-Heinz inequality, which in our case states that if $A \leq B$, then
	\eqn{
		\label{eq:Loewner-Heinz}
		A^s \leq B^s, \quad s\in [0,1],
	}
	cf. for instance \cite{kato1952notes}. Inequality \eqref{eq:Loewner-Heinz} means that the function $x^s$ with $x\in [0,\infty)$ is operator monotone for $s\in[0,1]$.
	It follows that  $-(x)^s$ is operator convex (cf. \cite{hansen1982monotone}), that is, for any two symmetric positive semi-definite operators $A$ and $B$ on a Hilbert space $X$, the inequality
	\eqns{
		\label{eq:operator-convex-def}
		\lambda A^s + (1-\lambda)B^s \leq \left(\lambda A + (1-\lambda)B \right)^s
	}
	holds for every $\lambda \in [0,1]$.
	A key result regarding operator convex functions is the Jensen's operator inequality (cf. \cite[Theorem 2.1]{hansen2003jensen}). The version we will use in the current work states that for any $K\in \mathbb{N}$ and $s\in [0,1]$
	\eqn{
		\label{eq:jensen-inequality}
		\sum_{k=1}^K \adjoint{P_k}A_k^s P_k \leq \left(\sum_{k=1}^K \adjoint{P_k}A_k P_k\right)^s,
	}
	where for $k=1,\ldots,K$, $A_k$ are symmetric positive semi-definite operators on $X$, and $P_k$ are linear operators on $X$ so that $\sum_{k=1}^K \adjoint{P}_k P_k \leq 1$.

	\subsection{Fractional Sobolev spaces}
	\label{sec:interpolation_theory}
	We consider the interpolation spaces between $H^1(\Omega)$ and $L^2(\Omega)$ as defined in \cite{lions1972nonhom}. Let the inner product on $H^1(\Omega)$ be realized by the operator $A := I - \Delta $, as
	\eqns{
		\inner{u}{v}_1 = \inner{Au}{v} = \inner{u}{v} + \inner{\grad u}{\grad v}, \quad u,v \in H^1(\Omega).
	}
	$A$ is unbounded as an operator mapping $L^2(\Omega)$ to $L^2(\Omega)$.  However, $A$ is well-defined on the set
	\eqns{
		D(A) = \setof{u \in L^2(\Omega) \sothat Au \in L^2(\Omega)},
	}
	which is a dense subspace of $L^2(\Omega)$. On $D(A)$, $A$ is symmetric and positive-definite, and so the fractional powers of $A$,  $A^\theta$ for $\theta\in \reals$, are well-defined. Note that in the particular case $\theta = \frac{1}{2}$,
	\eqns{
		\norm{A^{\frac{1}{2}}u}^2 = \inner{Au}{u} = \norm{u}_1.
	}
	For $s\in[0,1]$, we define the fractional Sobolev spaces as
	\eqn{
		\label{eq:Frac-Sobolev-space-def}
		H^s(\Omega) = \setof{u \in L^2(\Omega) \sothat A^{\frac{s}{2}}u \in L^2(\Omega) },
	}
	which is a Hilbert space with inner product given by
	\eqns{
		\inner{u}{v}_s = \inner{A^s u}{v}, \quad u,v\in H^s(\Omega),
	}
	and we denote the corresponding norm by $\norm{\cdot}_s$.
	
	We define $H_0^s(\Omega)$ to be the closure of $C_0^\infty(\Omega)$, the space of infinitely smooth functions with compact support in $\Omega$, in the norm of $H^s(\Omega)$. We note that if $s \leq \frac{1}{2}$, the spaces $H_0^s(\Omega)$ and $H^s(\Omega)$ coincide (cf. \cite[Theorem 11.1]{lions1972nonhom}). For $s \in [-1,0]$, we define a family of fractional Sobolev spaces using the dual of $H_0^s(\Omega)$. That is,
	\eqns{
		H^s(\Omega) = \dual{\left(H^{-s}_0(\Omega)\right)}.
	}
	
	Replacing $H^1(\Omega)$ with $H_0^1(\Omega)$ and setting $A = -\Delta$ in the above construction, will again yield the space $H^s_0(\Omega)$, with equivalent norm, for all $s$ except when $s = \frac{1}{2}$. In this case, interpolation between $H_0^1(\Omega)$ and $L^2(\Omega)$ results in a space that is strictly contained in $H_0^{\frac{1}{2}}(\Omega)$. The subsequent analysis is valid for both $H^s_0(\Omega)$ and $H^s(\Omega)$.  
	
	We remark that the above defined fractional space $H^s(\Omega)$ is equivalent to 
	the fractional space $\hat{H}^s(\Omega)$ defined in terms of the norm
	\[ 
	\norm{u}^2_{\hat{H}^{s}(\Omega)} =  \norm{u}^2 +  
	\int_{\Omega \times \Omega} \frac{|u(x) -
		u(y)|}{|x-y|^{n+s}}\, \intd x \intd y.
	\]
	A detailed overview of the various definitions of fractional Sobolev norms and 
	their discretizations can be found in \cite{lischke2018fractional}.  
	
	\subsection{Discrete fractional Sobolev spaces}
	
	We will now consider a discretization of the fractional Sobolev spaces $H_0^s(\Omega)$ and $H^{-s}(\Omega)$ for $s\in[0,1]$.
	Let $X_h$ be a finite-dimensional subspace of $H_0^1(\Omega)$, with $\dim X_h = N_h$. We define the operator $A_h: X_h \to X_h$ by
	\eqn{
		\label{eq:Ah-def}
		\inner{A_h u}{v} = \inner{\grad u}{\grad v}, \quad u,v \in X_h.
	}
	Using the fractional powers of $A_h$, we define for $s\in \reals$ the discrete fractional inner product on $X_h$ by
	\eqns{
		\inner{u}{v}_{s,h} = \inner{A^s_h u}{v}, \quad u,v \in X_h,
	}
	and denoted the corresponding norm  by $\norm{\cdot}_{s,h}$. 
	It is clear that for $s=0$ and $s=1$, the two norms $\norm{\cdot}_{s,h}$ and $\norm{\cdot}_s$ coincide on $X_h$. Therefore,  due to \cite[Lemma 2.3]{arioli2009discrete}, the norms $\norw{\cdot}{s,h}$ and $\norw{\cdot}{s}$, when $s\in [0,1]$, are equivalent on $X_h$, with constants of equivalence independent of $N_h$.
	
	Let $X_H$ be a subspace of $X_h$, and $A_H: X_H \to X_H$ be defined analogously to  $A_h$ in \eqref{eq:Ah-def}. If $I_H: X_H \to X_h$ is the inclusion map, we see that
	\eqn{
		\label{eq:MhAh-inheritance}
		A_H = \adjoint{I}_HA_hI_H,
	}
	where $\adjoint{I}_H$ is the adjoint of $I_H$ with respect to the $L^2$ inner product.
	
	We may also define $A^s_H: X_H \to X_H$, but generally, $A^s_H \not= \adjoint{I}_H A^s_h I_H$.  However, by Jensen's operator inequality we have the following.
	\begin{lemma}
		\label{lem:subspace-As-estimate}
		For every $s \in [0,1]$ we have
		\eqns{
			\label{eq:subspace-As-estimate-operator}
			\adjoint{I_H}A^s_h I_H \leq A^s_H.
		}
		That is, for every $u \in X_H$,
		\eqn{
			\label{eq:subspace-As-estimate}
			\inner{A^s_h u}{ u} \leq \inner{A^s_H u}{u}.
		}
	\end{lemma}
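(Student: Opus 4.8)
The plan is to obtain this as a single‑term ($K=1$) instance of Jensen's operator inequality \eqref{eq:jensen-inequality}. Take $X=X_h$, let $A_1=A_h$, and let $P\colon X_h\to X_h$ be the $L^2$‑orthogonal projection onto $X_H$. The first step is to check the hypothesis of \eqref{eq:jensen-inequality}: $P$ is self‑adjoint and idempotent with respect to $\inner{\cdot}{\cdot}$, so $\adjoint{P}P=P\leq\identity$. Applying \eqref{eq:jensen-inequality} with $s\in[0,1]$ then gives, as operators on $X_h$,
\[
P A_h^s P \;\leq\; \left(P A_h P\right)^s .
\]

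The second step is to identify both sides of this inequality on the subspace $X_H$. For $u\in X_H$ we have $Pu=u$ and $\adjoint{P}=P$, so $\inner{P A_h^s P u}{u}=\inner{A_h^s u}{u}=\inner{\adjoint{I_H}A_h^s I_H u}{u}$, which is the left‑hand side of \eqref{eq:subspace-As-estimate-operator} tested against $u$. For the right‑hand side I would show that $P A_h P$, as a symmetric positive semi‑definite operator on $X_h$, restricts to $A_H$ on $X_H$ and vanishes on the $L^2$‑orthogonal complement of $X_H$ in $X_h$: indeed, for $u,v\in X_H$, $\inner{P A_h P u}{v}=\inner{A_h u}{v}=\inner{\grad u}{\grad v}=\inner{A_H u}{v}$ by \eqref{eq:Ah-def} and \eqref{eq:MhAh-inheritance}, while $P A_h P$ annihilates every $w\in X_h$ with $\inner{w}{v}=0$ for all $v\in X_H$ since $Pw=0$. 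Because $X_H\subset H_0^1(\Omega)$, the Poincaré inequality makes $A_H$ positive‑definite on $X_H$, so the kernel of $P A_h P$ is precisely that orthogonal complement; consequently $(P A_h P)^s u = A_H^s u$ for $u\in X_H$, using the convention that fractional powers with $s\geq 0$ discard the kernel.

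Combining these two identifications and testing the displayed operator inequality against an arbitrary $u\in X_H$ yields $\inner{A_h^s u}{u}\leq\inner{A_H^s u}{u}$, which is \eqref{eq:subspace-As-estimate}, hence \eqref{eq:subspace-As-estimate-operator}. The only delicate point is the spectral identification $(P A_h P)^s = A_H^s$ on $X_H$ — i.e.\ that the $s$‑th power of the "extended‑by‑zero" operator $P A_h P$ reproduces $A_H^s$ on the nontrivial part of its spectrum — which is exactly where the $s\geq 0$ convention on the kernel is invoked. An equivalent shortcut, if one is willing to use Jensen's inequality in the form valid for a contraction $C\colon X_H\to X_h$ between distinct spaces, is to note that $I_H$ is an $L^2$‑isometry, so $\adjoint{I_H}I_H=\identity_{X_H}\leq 1$, and operator concavity of $x\mapsto x^s$ on $[0,1]$ together with $0^s\geq 0$ gives $\adjoint{I_H}A_h^s I_H\leq(\adjoint{I_H}A_h I_H)^s=A_H^s$ directly via \eqref{eq:MhAh-inheritance}.
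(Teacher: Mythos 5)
Your proposal is correct and is essentially the paper's own argument: both apply Jensen's operator inequality with the single contraction $P=I_H\adjoint{I_H}$ (the $L^2$-projection onto $X_H$) to get $PA_h^sP\leq(PA_hP)^s$, and then identify $(PA_hP)^s$ with $A_H^s$ on $X_H$. The only cosmetic difference is in that identification step: you argue it via the block-diagonal spectral structure of the extended-by-zero operator, while the paper establishes $A_H^s=\adjoint{I_H}(I_HA_H\adjoint{I_H})^sI_H$ by proving the identity for polynomials and passing to the limit with Weierstrass approximation; both are valid.
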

	\begin{proof}
		For $s=0$ and $s=1$, \eqref{eq:subspace-As-estimate} holds with equality, so let $0<s<1$. We start by noticing that since $\adjoint{I_H}I_H$ is the identity on $X_H$,
		\eqns{
			A^2_H = (\adjoint{I_H}I_HA_H \adjoint{I_H}I_H)^2 = \adjoint{I_H}(I_H A_H \adjoint{I_H})^2 I_H.
		}
		By induction, we find that
		\eqns{
			A^k_H = \adjoint{I_H}(I_H A_H \adjoint{I_H})^k I_H
		}
		for every nonnegative integer $k$. It follows that for any polynomial $p$
		\eqn{
			\label{eq:subspace-As-estimate-proof1}
			p(A_H) = \adjoint{I_H}\,p\left( I_H A_H \adjoint{I_H} \right) I_H.
		}
		
		Take now $\epsilon > 0$. The spectra of both $A_H$ and $I_H A_H \adjoint{I_H}$ are contained in some bounded, nonnegative interval $[0,b]$. By Weierstrass' approximation Theorem we can thus choose a polynomial $p$ so that
		\eqns{
			\norm{\left(I_H A_H \adjoint{I_H} \right)^s - p(I_H A_H \adjoint{I_H} )} < \epsilon, \quad \text{ and } \quad \norm{A_H^s - p(A_H)} < \epsilon.
		}
		Using the triangle inequality and \eqref{eq:subspace-As-estimate-proof1} we have that
		\eqns{
			\norm{A_H^s -\adjoint{I_H}\left( I_H A_H \adjoint{I_H} \right)^s I_H} \leq \norm{A_H^s - p(A_H)} + \norm{\adjoint{I_H}\left( \left(I_H A_H \adjoint{I_H} \right)^s - p(I_H A_H \adjoint{I_H} )\right) I_H} < 2\epsilon,
		}
		and since $\epsilon$ was arbitrary, this shows that
		\eqn{
			\label{eq:subspace-As-estimate-proof2}
			A^s_H =	\adjoint{I_H}(I_H A_H \adjoint{I_H})^s I_H.
		}
		
		Using \eqref{eq:MhAh-inheritance} in \eqref{eq:subspace-As-estimate-proof2}, we get that
		\eqn{
			\label{eq:subspace-As-estimate-proof3}
			A^s_H =	\adjoint{I_H}(I_H \adjoint{I_H}A_h I_H \adjoint{I_H})^s I_H.
		} 
		Finally, $I_H \adjoint{I_H}$ defines a symmetric operator on $X_h$ with $L^2$ operator norm equal to $1$. Since the function $x \mapsto -x^s$ is operator convex on $[0,\infty)$ we can use Jensen's operator inequality \eqref{eq:jensen-inequality} in \eqref{eq:subspace-As-estimate-proof3} to get
		\algns{
			A^s_H &\geq \adjoint{I_H}I_H \adjoint{I_H} A^s_h I_H \adjoint{I_H} I_H \\
			& = \adjoint{I_H} A^s_h I_H,
		}
		where we have used that $\adjoint{I_H}I_H$ is the identity on $X_H$. 
	\end{proof}

	\section{Abstract multilevel theory}
	\label{sec:abstract-bpx}
	In order to analyze and implement a multigrid preconditioner for the fractional Laplacian there are three main issues
	that need to be dealt with. First, we need to  derive and implement a smoother with the desired properties. 
	As already mentioned in the introduction, this step only requires a minor modification to standard smoothing 
	algorithms. We will discuss the details concerning implementation later. 
	Second, the restriction/interpolation operators do not 
	result in a nested hierarchy of operators in our fractional setting as 
	$A^s_H \not= I^*_H A_h I_H$. For this reason we will employ the framework for non-nested
	multilevel algorithms developed in ~\cite{bramble1991analysis}.
	Third, our main motivation for developing fractional multilevel solvers is their application to 
	multiphysics and multiscale problems where the preconditioner for the fractional Laplacian 
	is utilized at the interfaces. As such, the fractional Laplacian operator is not 
	part of the original problem and we may therefore not assume that this operator 
	has been implemented. Furthermore, implementing this operator in an efficient 
	manner is a challenge, as the exact fractional operator involves
	the solution of a large, global and expensive generalized eigenvalue problem.  
	To avoid the application of the fractional Laplacian on the various levels
	we employ additive multilevel schemes which enable the residual of the problem to be used at all levels and remove 
	the need for implementing a global fractional Laplacian operator. That said, 
	the theory developed here extends to multiplicative algorithms for problems involving the fractional Laplacian, such as the standard V-cycle.   
	In this section we will address the second and the third issues and outline a theory for an additive multilevel scheme applied to an  abstract 
	non-nested problem. As such analysis of this section is the synthesis of the papers~\cite{bramble1990parallel} and~\cite{bramble1991analysis}.    
	
	%
	
	Assume that we are given a nested sequence of finite-dimensional function spaces
	\eqns{
		V_1 \subset V_2 \subset \cdots \subset V_J = V, \quad J \geq 2.
	}
	We further assume that $V$, and consequently all subspaces of $V$, is endowed with an inner product $\inner{\cdot}{\cdot}$, with corresponding induced norm $\norm{\cdot}$. Moreover, for each $k=1,\ldots, J$, we assume we are given a symmetric positive definite operator $A_k: V_k \to V_k$, and we set $A = A_J$. Note that we do not assume that the $A_k$ operators are nested.
	
	For the development and analysis of a multilevel algorithm, it will be useful to define a number of operators on each level $k$. First, we define $P_{k,k-1}: V_k \to V_{k-1}$ by
	\eqn{
		\inner{A_{k-1}P_{k,k-1}v}{w} = \inner{A_k v}{w}, \quad \forall v \in V_k, w \in V_{k-1}.
	}
	We remark that in a nested setting, $P_{k,k-1}$ is the $A$-projection, while since the $A_k$ operators are not nested, the $P_{k,k-1}$ operators are not projections.
	Next, we define $Q_k: V \to V_k$ by
	\eqn{
		\inner{Q_k v}{w} = \inner{v}{w}, \quad \forall v \in V, w \in V_k.
	}
	It follows by the above definitions that
	\eqn{
		\label{eq:AP_QA}
		A_{k-1}P_{k,k-1} = Q_{k-1}A_k,
	}
	and $Q_l Q_k = Q_k Q_l = Q_l$ whenever $l \leq k$.
	For the sake of brevity, it will also be useful to define $P_k: V \to V_k$ by $P_k = P_{k+1,k} P_{k+2,k+1} \cdots P_{J,J-1}$. Using the definition of $P_{j+1,j}$ for $j=k,\ldots,J-1$ we see that
	\eqns{
		\inner{A_kP_kv}{w} = \inner{Av}{w}, \quad \forall v \in V, w\in V_k.
	}
	Furthermore, applying \eqref{eq:AP_QA} repeatedly, we find that 
	\eqn{
		\label{eq:AP_QA-all-levels}
		A_k P_k = Q_k A.
	}
	
	Finally, suppose we are given for each $k$ a smoother, which is a symmetric positive definite operator $R_k : V_k \to V_k$, which in some sense should approximate $\inv{A_k}$
	on $V_k \backslash V_{k-1}$.
	We can now define a additive multilevel operator $B: V \to V$ by
	\eqn{
		\label{eq:BPX-def}
		B = \sum_{k=1}^J R_k Q_k.
	}
	As remarked in \cite{bramble1990parallel}, $B$ can viewed as a additive version of the standard multiplicative V-cycle multigrid algorithm, where $R_k$ plays the role of smoother. Because of this, it is reasonable that the assumptions we need to make to establish spectral equivalence between $\inv{A}$ and $B$ are similar to those made for standard multigrid algorithms.
	
	We assume that for $k=2,\ldots,J$
	\ass{1}{
		\label{ass:noninheritance}
		\inner{A_k v}{v} \leq \inner{A_{k-1}v}{v}, \quad \forall v \in V_{k-1}.
	}
	Under assumption \eqref{ass:noninheritance} and the definition of $P_{k,k-1}$ we see that for any $v \in V_k$
	\algns{
		\inner{A_{k-1}P_{k,k-1}v}{P_{k,k-1}v} &= \inner{A_k v}{P_{k,k-1}v} \\
		&\leq \inner{A_k P_{k,k-1}v}{P_{k,k-1}v}^{\frac{1}{2}}\inner{A_k v}{v}^{\frac{1}{2}} \\
		& \leq \inner{A_{k-1} P_{k,k-1}v}{P_{k,k-1}v}^{\frac{1}{2}}\inner{A_k v}{v}^{\frac{1}{2}}.
	}
	Thus, we see that \eqref{ass:noninheritance} implies 
	\eqn{
		\label{eq:noninheritance-adjoint}
		\inner{A_{k-1}P_{k,k-1}v}{P_{k,k-1}v} \leq \inner{A_k v}{v}, \quad \forall v \in V_k.
	}
	Conversely, assume \eqref{eq:noninheritance-adjoint}. Then, for any $v \in V_{k-1}$ and the definition of $P_{k,k-1}$
	\algns{
		\inner{A_k v}{v} &= \inner{A_{k-1}P_{k,k-1}v}{v} \\
		&\leq \inner{A_{k-1}P_{k,k-1}v}{P_{k,k-1}v}^{\frac{1}{2}} \inner{A_{k-1}v}{v}^{\frac{1}{2}} \\
		&\leq \inner{A_{k}v}{v}^{\frac{1}{2}} \inner{A_{k-1}v}{v}^{\frac{1}{2}},
	}
	which implies \eqref{ass:noninheritance}. Thus, \eqref{ass:noninheritance} and \eqref{eq:noninheritance-adjoint} are equivalent.
	Notice that a similar inequality to \eqref{eq:noninheritance-adjoint} would also hold for $P_k$, namely
	\eqn{
		\label{eq:noninheritance-adjoint-all-levels}
		\inner{A_k P_k v}{P_k v} \leq \inner{A v}{v}, \quad \forall v \in V.
	}
	
	For the operators $R_k$, we assume there are constants $C_1, C_2 > 0$, independent of $k$ so that
	\ass{2}{
		\label{ass:smoother-stability}
		C_1 \frac{\norm{v}^2}{\lambda_k} \leq \inner{R_k v}{v} \leq C_2 \inner{\inv{A_k}v}{v}, \quad \forall v \in V_k,
	}
	where $\lambda_k$ is the largest eigenvalue of $A_k$.  Lastly, as is common in multigrid theory, we will use an approximation assumption to establish spectral equivalence between $B$ and $\inv{A}$. In this work, we assume the following approximation property: That there is an $\alpha \in (0,1]$ and constant $C_3 > 0$, independent of $k$, so that
	\ass{3}{
		\label{ass:approximation-property}
		\inner{A_k(I-P_{k,k-1})v}{v} \leq C_3^\alpha \left( \frac{\norm{A_k v}^2}{\lambda_k} \right)^\alpha \inner{A_k v}{v}^{1-\alpha}, \quad \forall v \in V_k.
	} 
	
	We are now in a position to state and prove the main theorem of this section:
	\begin{theorem}
		\label{thm:bpx-spectral-equivalence}
		Assume that \eqref{ass:noninheritance}, \eqref{ass:smoother-stability}, and \eqref{ass:approximation-property} hold. Then, with $B$  given in \eqref{eq:BPX-def},
		\eqn{
			\label{eq:bpx-spectral-equivalence}
			C_1 \inv{C_3}J^{1-\frac{1}{\alpha}}\inner{A v}{v} \leq \inner{B Av}{Av} \leq C_2J\inner{A v}{v}.
		}
		holds for every $v \in V$.
	\end{theorem}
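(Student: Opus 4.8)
The plan is to prove the two inequalities in \eqref{eq:bpx-spectral-equivalence} separately. The upper bound will follow almost immediately from the right half of the smoother stability assumption \eqref{ass:smoother-stability} together with the consequence \eqref{eq:noninheritance-adjoint-all-levels} of \eqref{ass:noninheritance}; the lower bound is where the work lies, and it will be obtained from a telescoping identity, the approximation property \eqref{ass:approximation-property}, and the elementary concavity inequality $\sum_{k=1}^J a_k^\alpha \le J^{1-\alpha}\big(\sum_{k=1}^J a_k\big)^\alpha$, valid for nonnegative $a_k$ and $\alpha \in (0,1]$.

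For the upper bound I would write $\inner{BAv}{Av} = \sum_{k=1}^J \inner{R_k Q_k Av}{Q_k Av}$, using that $R_k Q_k Av \in V_k$ and the definition of $Q_k$ to replace the second argument by $Q_k Av$. The right inequality in \eqref{ass:smoother-stability} then bounds each term by $C_2 \inner{\inv{A_k}Q_k Av}{Q_k Av}$, and the identity \eqref{eq:AP_QA-all-levels} in the form $\inv{A_k}Q_k A = P_k$ rewrites this as $C_2 \inner{A_k P_k v}{P_k v}$. By \eqref{eq:noninheritance-adjoint-all-levels} each of these $J$ terms is at most $C_2 \inner{Av}{v}$, which gives the factor $C_2 J$.

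For the lower bound I would begin from the telescoping identity $\inner{Av}{v} = \sum_{k=1}^J \big( \inner{A_k P_k v}{P_k v} - \inner{A_{k-1}P_{k-1}v}{P_{k-1}v}\big)$, with the convention $A_0 P_0 := 0$ (so the $k=1$ summand is $\inner{A_1 P_1 v}{P_1 v}$), which is legitimate because $P_J$ is the identity on $V$. Since $P_{k-1} = P_{k,k-1}P_k$, each summand equals $\inner{A_k(I - P_{k,k-1})P_k v}{P_k v}$: this uses the defining relation of $P_{k,k-1}$ to collapse $\inner{A_{k-1}P_{k,k-1}w}{P_{k,k-1}w}$ to $\inner{A_k w}{P_{k,k-1}w}$ for $w=P_kv$, and the self-adjointness of $A_k P_{k,k-1}$ on $V_k$ (which itself follows from the definition of $P_{k,k-1}$ and the self-adjointness of $A_{k-1}$) to put the difference into the exact form appearing in \eqref{ass:approximation-property}. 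Applying \eqref{ass:approximation-property} to each summand, using \eqref{eq:AP_QA-all-levels} to identify $A_k P_k v = Q_k A v$ (hence $\norm{A_k P_k v}^2/\lambda_k = \norm{Q_k Av}^2/\lambda_k$), and bounding $\inner{A_k P_k v}{P_k v}^{1-\alpha} \le \inner{Av}{v}^{1-\alpha}$ again via \eqref{eq:noninheritance-adjoint-all-levels}, I would arrive at
\[
\inner{Av}{v} \le C_3^\alpha \inner{Av}{v}^{1-\alpha} \sum_{k=1}^J \left( \frac{\norm{Q_k Av}^2}{\lambda_k}\right)^\alpha.
\]

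To conclude, I would apply the concavity inequality above to the sum on the right, and then the left half of \eqref{ass:smoother-stability} in the form $\sum_{k=1}^J \norm{Q_k Av}^2/\lambda_k \le \inv{C_1}\sum_{k=1}^J \inner{R_k Q_k Av}{Q_k Av} = \inv{C_1}\inner{BAv}{Av}$. After cancelling the common factor $\inner{Av}{v}^{1-\alpha}$ (the case $v=0$ being trivial) and raising to the power $1/\alpha$, the constants and powers of $J$ combine to give exactly $C_1 \inv{C_3} J^{1-1/\alpha}\inner{Av}{v} \le \inner{BAv}{Av}$. I expect the main obstacle to be bookkeeping rather than anything conceptual: because the $A_k$ are not nested, the operators $P_{k,k-1}$ are not projections, so one must be careful to justify the self-adjointness of $A_k P_{k,k-1}$ and the telescoping rewriting, and to check that each ``$\le \inner{Av}{v}$'' step is genuinely an appeal to \eqref{eq:noninheritance-adjoint-all-levels} (equivalently, to Assumption \eqref{ass:noninheritance}). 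The hypothesis $\alpha \le 1$ enters only through the concavity step, and this is precisely where the degradation of the lower bound with the number of levels $J$ (logarithmic, once $J$ is proportional to the logarithm of the number of unknowns) is produced.
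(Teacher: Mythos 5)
Your proposal is correct and follows essentially the same route as the paper: the upper bound via the right half of (A.2) and \eqref{eq:noninheritance-adjoint-all-levels}, and the lower bound via the identity $\inner{Av}{v}=\sum_{k=1}^J\inner{A_k(I-P_{k,k-1})P_kv}{P_kv}$, the approximation property (A.3), the left half of (A.2), and H\"older's inequality (your ``concavity inequality''). The only cosmetic difference is that you telescope the quadratic forms $\inner{A_kP_kv}{P_kv}$ directly, whereas the paper telescopes $v=\sum_k(P_k-P_{k-1})v$ and then pairs with $Av$; both reduce to the same identity via the self-adjointness of $A_kP_{k,k-1}$.
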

	\begin{proof}
		Fix $v \in V$.
		Using the definition of $B$ together with \eqref{eq:AP_QA-all-levels} we find that
		\eqns{
			\inner{BA v}{Av} = \sum_{k=1}^J \inner{R_k Q_k A v}{Q_k A v} = \sum_{k=1}^J \inner{R_k A_k P_k v}{A_k P_k}.
		}
		Thus, applying the second inequality of \eqref{ass:smoother-stability} and \eqref{eq:noninheritance-adjoint-all-levels} gives
		\eqns{
			\inner{B Av}{Av} \leq C_2\sum_{k=1}^J \inner{A_k P_kv}{P_k v} \leq C_2 J \inner{A v}{v},
		}
		which proves the second inequality of \eqref{eq:bpx-spectral-equivalence}.
		
		For the first inequality of \eqref{eq:bpx-spectral-equivalence} we write
		\eqns{
			v = \sum_{k=1}^J (P_k - P_{k-1})v,
		}
		where we interpret $P_0 = 0$ and $P_J =I$. By the definition of $P_k$, we have that $P_{k-1} = P_{k,k-1}P_k$, and so
		\eqns{
			v = \sum_{k=1}^J (I - P_{k,k-1})P_k v.
		}
		It follows that
		\eqns{
			\inner{A v}{v} = \sum_{k=1}^J \inner{A_k(I-P_{k,k-1})P_k v}{P_k v}.
		}
		Using \eqref{ass:approximation-property} and \eqref{eq:noninheritance-adjoint-all-levels}, gives
		\algns{
			\inner{Av}{v} &\leq C_3^\alpha \sum_{k=1}^J\left(\inv{\lambda_k}\norm{A_kP_kv}^2\right)^\alpha \inner{A_k P_k v}{P_k v}^{1-\alpha} \\
			&\leq C_3^\alpha \inner{A v}{v}^{1-\alpha}\sum_{k=1}^J\left(\inv{\lambda_k}\norm{A_kP_kv}^2\right)^\alpha.
		}
		The first inequality of \eqref{ass:smoother-stability} then implies that
		\algns{
			\inner{A v}{v} &\leq (\inv{C_1}C_3)^\alpha \inner{A v}{v}^{1-\alpha}\sum_{k=1}^J\inner{R_kA_k P_k v}{A_k P_k v}^\alpha \\
			&\leq (\inv{C_1}C_3)^\alpha J^{1-\alpha} \inner{A v}{v}^{1-\alpha}\left(\sum_{k=1}^J\inner{R_kA_k P_k v}{A_k P_k v}\right)^\alpha \\
			&\leq (\inv{C_1}C_3)^\alpha J^{1-\alpha} \inner{A v}{v}^{1-\alpha}\inner{B A v}{A v}^\alpha,
		}
		where the second step follows by Hölder's inequality. The last step follows by the definition of \eqref{eq:BPX-def} and \eqref{eq:AP_QA-all-levels}. Dividing by $(\inv{C_1}C_3)^\alpha \inner{A v}{v}^{1-\alpha}J^{1-\alpha}$ on both sides and raising to the power $\frac{1}{\alpha}$ gives the first inequality of \eqref{eq:bpx-spectral-equivalence}. 
	\end{proof}
	
	\begin{rem}
		\label{rem:bpx-weak-spectral-equivalence}
		Analagously to what was done in \cite{bramble1990parallel}, we can replace assumption \eqref{ass:approximation-property} with an assumption on the projections $Q_k$. In particular, if instead of \eqref{ass:approximation-property}, we assume that there is a constant $C_4 > 0$, independent of $k$ so that
		\eqns{
			\norm{(I-Q_{k-1})v}^2 \leq C_4 \inv{\lambda_k} \inner{A v}{v}, \quad \forall v \in V,
		}
		then we can use an argument like what was made in \cite[Theorem 1 and Corollary 1]{bramble1990parallel} to show that
		\eqn{
			\label{eq:bpx-weak-spectral-equivalence}
			\inv{C_4}C_1\inv{J}\inner{A v}{v} \leq \inner{B Av}{Av} \leq C_2 J \inner{A v}{v}
		}
		for every $v \in V$.
	\end{rem}
	
	\section{Preconditioner for discrete fractional Laplacian}
	\label{sec:discrete-bpx}
	In this section we use the abstract theory developed in Section \ref{sec:abstract-bpx} to derive an order optimal preconditioner for the discrete fractional Laplacian $A^s_h$, described in Section \ref{sec:preliminaries}, when $s \in [0,1]$.
	
	Let $\Omega$ be a bounded, polygonal domain in $\reals^n$ and suppose we are given a quasi-uniform triangulation of $\Omega$, denoted by $\T_h$, where $h$ denotes the characteristic mesh size. We restrict our discussion to the case when $V_h$ is the space of continuous, piecewise linear functions relative to the triangulation $\T_h$ which vanish on $\pd \Omega$. To define a nested sequence of subspaces, we suppose that $\T_h$ is constructed by successive refinements. That is, we are given a sequence,
	\eqns{
		\T_1 \subset \cdots \T_J =\T_h,
	}
	of quasi-uniform triangulations, and $\T_k$ has characteristic mesh size $h_k$ for $k=1,\ldots,J$. In the following, we will assume the bounded refinement hypothesis, that is, $h_{k-1} \leq \gamma h_k$ for $k=2,\ldots,J$, where $\gamma \geq 1$ is a constant. In practice, $\gamma$ is around $2$.
	For each $k$ we define $V_k$ as the space of continuous, piecewise linear functions relative to $\T_k$ that vanish on $\pd \Omega$. Further, we define $A_k: V_k \to V_k$ by
	\eqns{
		\inner{A_k v}{w} = \inner{\grad v}{\grad w}, \quad v,w\in V_k.
	}
	
	We now fix $s \in [0,1]$. Since $A_k$ is symmetric positive definite, we can define SPD operators $A^s_k$ and corresponding norms
	\eqns{
		\norm{v}_{s,k}^2 := \inner{A^s_k v}{v}, \quad v\in V_k.
	}
	Note that if $s=0$ or $s=1$, the norm $\norm{\cdot}_{s,k}$ coincides with the $L^2$- and $H^1_0$-norm, respectively. That is, $\norm{\cdot}_{0,k} = \norm{\cdot}$, and $\norm{\cdot}_{1,k} = \norm{\cdot}_{1}$.
	
	Analogous to the discussion in Section \ref{sec:abstract-bpx} we also define operators $P^s_{k,k-1}: V_k \to V_{k-1}$ by
	\eqn{
		\label{eq:Ps_k-definition}
		\inner{A^s_{k-1}P^s_{k,k-1}v}{w} = \inner{A^s_k v}{w}, \quad \forall v \in V_k, w \in V_{k-1}.
	}
	$Q_{k}: V_J \to V_k$ as the $L^2$-projection, and $P^s_k := P^s_{k+1,k} P^s_{k+2,k+1}\cdots P^s_{J,J-1}$. 
	
	To complete the description of a multilevel preconditioner, we still need to define smoothers, $R^s_k$, for each $k$ and $s$. In this work, we will define additive smoothers based on domain decomposition. To that end, let $\N_k$ be the set of vertices in the triangulation $\T_k$, and for each $\nu \in \N_k$, let $\T_{k,\nu}$ be the set of triangles meeting at the vertex $\nu$. Then $\T_{k,\nu}$ forms a triangulation of a small subdomain $\Omega_{k,\nu}$. We define $V_{k,\nu}$ to be the subspace of functions in $V_k$ with support contained in $\bar{\Omega}_{k,\nu}$.  Analogously to what we did for $V_k$, we may define for each $\nu \in \N_k$ operators $A^s_{k,\nu}: V_{k,\nu} \to V_{k,\nu}$, and $Q_{k,\nu}: V_k \to V_{k,\nu}$. For $k=2,\ldots,J$, we define
	\eqn{
		\label{eq:Rk-definition}
		R^s_k := \sum_{\nu \in \N_k} A^{-s}_{k,\nu}Q_{k,\nu},
	}
	while on the coarsest level we set $R^s_1 = A^{-s}_1$. We note that the smoothers are symmetric positive-definite, and their inverse satisfy
	\eqn{
		\label{eq:smoother-inverse-norm}
		\inner{\inv{(R^s_k)}v}{v} = \inf_{\underset{v_\nu \in V_{k,\nu}}{v = \sum_{\nu} v_\nu}} \sum_{\nu \in \N_k} \inner{A^s_{k,\nu}v_\nu}{v_\nu}, \quad v \in V_k.
	}
	Our preconditioner now reads
	\eqn{
		\label{eq:BPXs-preconditioner-definition}
		B_h^s := \sum_{k=1}^J R^s_k Q_k.
	}
	
	We want to apply Theorem \ref{thm:bpx-spectral-equivalence} to the preconditioner defined by \eqref{eq:BPXs-preconditioner-definition} and \eqref{eq:Rk-definition}, so we need to verify assumptions \eqref{ass:noninheritance}-\eqref{ass:approximation-property}.
	
	Using Lemma \ref{lem:subspace-As-estimate}, we immediately find that for every $k$,
	\eqns{
		\inner{A^s_k v}{v} \leq \inner{A^s_{k-1} v}{v}, \quad \forall v \in V_{k-1}, 
	}
	which verifies \eqref{ass:noninheritance} in the current context.
	
	We present the verification of \eqref{ass:smoother-stability} in the following Lemma.
	\begin{lemma}
		\label{lem:smoother-stability-verification}
		For $k=1,\ldots,J$, let $R^s_k: V_k \to V_k$ be defined by \eqref{eq:Rk-definition}. Assume that there exists a constant $K_0$, so that for every $k$ and $v \in V_k$ there exists a decomposition $\sum_{\nu \in \N_k}v_\nu = v$, with $v_\nu \in V_{k,\nu}$ so that
		\eqn{
			\label{eq:L2-stable-decomposition}
			\sum_{\nu \in \N_k}\norm{v_\nu}^2 \leq K_0 \norm{v}^2.
		}
		Then there are constants $C_1, C_2 > 0$, so that for every $k$, 
		\eqn{
			\label{eq:smoother-stability-BPXs}
			C_1 \frac{\norm{v}^2}{\lambda^s_k} \leq \inner{R^s_k v}{v} \leq C_2 \inner{A^{-s}_k v}{v}, \quad \forall v \in V_k,
		}
		where $\lambda_k^s$ is the largest eigenvalue of $A^s_k$.
	\end{lemma}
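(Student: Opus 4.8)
The plan is to prove the two inequalities in \eqref{eq:smoother-stability-BPXs} separately. The coarsest level $k=1$ is immediate, since $R^s_1=A^{-s}_1$ has smallest eigenvalue $\lambda_1^{-s}=1/\lambda^s_1$ (here $\lambda_1:=\lambda_{\max}(A_1)$), so both bounds hold with any $C_1\le1\le C_2$; from now on fix $k\ge2$. Writing $\lambda_k:=\lambda_{\max}(A_k)$, so that $\lambda^s_k=\lambda_k^s$, I would start from the identity
\[
\inner{R^s_k v}{v}=\sum_{\nu\in\N_k}\inner{A^{-s}_{k,\nu}Q_{k,\nu}v}{Q_{k,\nu}v},\qquad v\in V_k,
\]
which holds because $A^{-s}_{k,\nu}Q_{k,\nu}v\in V_{k,\nu}$ and $Q_{k,\nu}$ is the $L^2$-orthogonal projection onto $V_{k,\nu}$. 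Everything then rests on two local eigenvalue facts. First, $A_{k,\nu}$ is the Galerkin restriction of $A_k$ to $V_{k,\nu}$ (the bilinear forms coincide), so $\lambda_{\max}(A_{k,\nu})\le\lambda_k$. Second, by quasi-uniformity there is a constant $c_0\in(0,1]$, independent of $k$ and $\nu$, with $\lambda_{\min}(A_{k,\nu})\ge c_0\lambda_k$; this is transparent for continuous piecewise linears, where $V_{k,\nu}=\operatorname{span}\{\phi_\nu\}$ is one-dimensional ($\phi_\nu$ the nodal hat function at $\nu$) and $A_{k,\nu}$ acts as multiplication by $\norm{\grad\phi_\nu}^2/\norm{\phi_\nu}^2\simeq h_k^{-2}\simeq\lambda_k$.

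For the upper bound I would use that the largest eigenvalue of $A^{-s}_{k,\nu}$ equals $\lambda_{\min}(A_{k,\nu})^{-s}\le(c_0\lambda_k)^{-s}$, giving $\inner{A^{-s}_{k,\nu}w}{w}\le c_0^{-s}\lambda_k^{-s}\norm{w}^2$ for $w\in V_{k,\nu}$. Summing over $\nu$ yields $\inner{R^s_k v}{v}\le c_0^{-s}\lambda_k^{-s}\sum_{\nu}\norm{Q_{k,\nu}v}^2$. Since each $Q_{k,\nu}v$ is supported in $\bar\Omega_{k,\nu}$ one checks $\norm{Q_{k,\nu}v}\le\norm{v}_{L^2(\Omega_{k,\nu})}$, and the patches $\Omega_{k,\nu}$ overlap with bounded multiplicity (again by quasi-uniformity), so $\sum_\nu\norm{Q_{k,\nu}v}^2\le\sum_\nu\norm{v}^2_{L^2(\Omega_{k,\nu})}\le C\norm{v}^2$. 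Finally $\lambda_k^{-s}$ is the smallest eigenvalue of $A^{-s}_k$, hence $\lambda_k^{-s}\norm{v}^2\le\inner{A^{-s}_k v}{v}$; chaining these estimates gives $\inner{R^s_k v}{v}\le c_0^{-s}C\inner{A^{-s}_k v}{v}$, and since $s\in[0,1]$ and $c_0\le1$ one may take $C_2:=C/c_0$.

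For the lower bound, the smallest eigenvalue of $A^{-s}_{k,\nu}$ equals $\lambda_{\max}(A_{k,\nu})^{-s}\ge\lambda_k^{-s}$, so $\inner{A^{-s}_{k,\nu}w}{w}\ge\lambda_k^{-s}\norm{w}^2$ and therefore $\inner{R^s_k v}{v}\ge\lambda_k^{-s}\sum_\nu\norm{Q_{k,\nu}v}^2$. This is where assumption \eqref{eq:L2-stable-decomposition} enters — but, and this is the step that needs care, not verbatim: one takes the stable decomposition $v=\sum_\nu v_\nu$ with $v_\nu\in V_{k,\nu}$ and $\sum_\nu\norm{v_\nu}^2\le K_0\norm{v}^2$, uses $\inner{v_\nu}{v}=\inner{v_\nu}{Q_{k,\nu}v}$, and applies Cauchy--Schwarz:
\[
\norm{v}^2=\sum_\nu\inner{v_\nu}{Q_{k,\nu}v}\le\Big(\sum_\nu\norm{v_\nu}^2\Big)^{1/2}\Big(\sum_\nu\norm{Q_{k,\nu}v}^2\Big)^{1/2}\le K_0^{1/2}\norm{v}\Big(\sum_\nu\norm{Q_{k,\nu}v}^2\Big)^{1/2},
\]
whence $\sum_\nu\norm{Q_{k,\nu}v}^2\ge K_0^{-1}\norm{v}^2$ and $\inner{R^s_k v}{v}\ge K_0^{-1}\lambda_k^{-s}\norm{v}^2$, i.e. $C_1:=K_0^{-1}$. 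The main obstacle is precisely this last point, together with keeping the directions of the two local eigenvalue inequalities straight ($\lambda_{\max}(A_{k,\nu})\le\lambda_k$ for the lower bound, $\lambda_{\min}(A_{k,\nu})\ge c_0\lambda_k$ for the upper); the upper bound itself is, somewhat surprisingly, nearly free, because the local solvers $A^{-s}_{k,\nu}$ are already of size $\lambda_k^{-s}$ while $A^{-s}_k\ge\lambda_k^{-s}I$, so no genuine smoothing property is needed for it.
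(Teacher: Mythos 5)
Your proof is correct, and your lower bound is essentially the paper's own argument: the same Cauchy--Schwarz manipulation of $\norm{v}^2=\sum_\nu\inner{v_\nu}{Q_{k,\nu}v}$ against the stable decomposition, combined with $\lambda_{\max}(A_{k,\nu})\le\lambda_k$, yielding $C_1=K_0^{-1}$. The upper bound, however, takes a genuinely different and more elementary route. The paper starts from the $s=1$ smoothing estimate $\inner{R^1_kv}{v}\le C\inner{\inv{A_k}v}{v}$ of Xu, and transfers it to fractional $s$ via the L\"owner--Heinz inequality and Jensen's operator inequality (after rescaling by $K_0K_1$); what that buys is the structural statement, recorded in the remark following the proof, that $L^2$- and $H^1_0$-stability of the space decomposition automatically imply stability in every intermediate norm $\norm{\cdot}_{s,k}$. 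You instead prove the two-sided spectral equivalence $R^s_k\simeq\lambda_k^{-s}I$ directly from local eigenvalue bounds plus bounded overlap of the patches, and then close with the crude inequality $A^{-s}_k\ge\lambda_k^{-s}I$. This is valid, but be aware that it leans on an ingredient not among the lemma's stated hypotheses: the lower bound $\lambda_{\min}(A_{k,\nu})\ge c_0\lambda_k$, which you justify only for one-dimensional piecewise-linear patches. It does hold for general vertex patches (a Friedrichs inequality on a patch of diameter $\sim h_k$ gives $\lambda_{\min}(A_{k,\nu})\gtrsim h_k^{-2}$, while the inverse inequality gives $\lambda_k\lesssim h_k^{-2}$), so within the concrete setting of Section 4 your argument is complete; but it is tied to this patch construction in a way the paper's interpolation argument is not, and it does not deliver the interpolation-of-stability observation. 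Your remark that the upper bound is ``nearly free'' is accurate here: with local solvers already of size $\lambda_k^{-s}$, the genuine content of assumption (A.2) in this setting is the lower bound.
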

	\begin{proof}
		It is evident that \eqref{eq:smoother-stability-BPXs} holds at the coarsest level, i.e.,  for $k=1$ \eqref{eq:smoother-stability-BPXs} is satisfied with $C_1 = C_2 = 1$. So let $k \geq 2$, and fix $v \in V_k$.
		For $\nu \in \N_k$, let $\lambda_{k,\nu}^s$ denote the largest eigenvalue of $A^s_{k,\nu}$.
		To prove the first inequality in \eqref{eq:smoother-stability-BPXs}, we begin by noting that for every $\nu \in \N_k$
		\eqns{
			\lambda_k^1 = \sup_{w \in V_k}\frac{\inner{A^1_k w}{w}}{\inner{w}{w}} \geq \sup_{w \in V_{k,\nu}}\frac{\inner{A^1_{k,\nu} w}{w}}{\inner{w}{w}} = \lambda_{k,\nu}^1.
		}
		Thus, since $\lambda_k^s = (\lambda_k^1)^s$, we have that 
		\eqn{
			\label{eq:smoother-stability-proof1}
			\lambda_k^s \geq \lambda_{k,\nu}^s.
		}
		Now, using \eqref{eq:smoother-stability-proof1} and assumption \eqref{eq:L2-stable-decomposition}, together with the definition of $Q_{k,\nu}$ and $R^s_k$ yields
		\algns{
			\frac{\inner{v}{v}}{\lambda_k^s} &= \frac{1}{\lambda_k^s}\sum_{\nu \in \N_k}\inner{v}{v_\nu} \\
			&= \frac{1}{\lambda_k^s}\sum_{\nu \in \N_k}\inner{Q_{k,\nu}v}{v_\nu} \\
			&\leq \left(\frac{1}{\lambda_k^s}\sum_{\nu \in \N_k}\inner{Q_{k,\nu}v}{Q_{k,\nu}v}\right)^{\frac{1}{2}} \left(\frac{1}{\lambda_k^s}\sum_{\nu \in \N_k}\norm{v_\nu}^2\right)^{\frac{1}{2}} \\
			&\leq  \left(\sum_{\nu \in \N_k}\frac{1}{\lambda_{k,\nu}^s}\inner{Q_{k,\nu}v}{Q_{k,\nu}v}\right)^{\frac{1}{2}}\left(\frac{K_0}{\lambda_k^s}\norm{v}^2\right)^{\frac{1}{2}} \\
			&\leq \left(\sum_{\nu \in \N_k}\inner{A^{-s}_{k,\nu}Q_{k,\nu}v}{Q_{k,\nu}v}\right)^{\frac{1}{2}}\left(\frac{K_0}{\lambda_k^s}\norm{v}^2\right)^{\frac{1}{2}} \\
			&\leq \inner{R^s_k v}{v}^{\frac{1}{2}} \left(\frac{K_0}{\lambda_k^s}\norm{v}^2\right)^{\frac{1}{2}},
		}
		which proves the first inequality of \eqref{eq:smoother-stability-BPXs} with $C_1 = \inv{K_0}$.
		
		For the second inequality, we begin by noting that for $s=1$, it was proven in \cite[Lemma 7.2]{xu1992iterative} that
		there is a constant $C$, independent of $k$ so that
		\eqns{
			\inner{R^1_k v}{v} \leq C\inner{\inv{A_k}v}{v}, \quad \forall v \in V_k.
		}
		Since $s\in [0,1]$, it follows by the Löwner-Heinz inequality \eqref{eq:Loewner-Heinz} that
		\eqn{
			\label{eq:smoother-stability-proof3}
			\inner{(R^1_k)^s v}{v} \leq C^s\inner{A^{-s}_k v}{v}, \quad \forall v \in V_k.
		}
		Thus, if we can show that 
		\eqn{
			\label{eq:smoother-stability-proof4}
			\inner{R^s_k v}{v} \leq C \inner{(R^1_k)^s v}{v},
		}
		for some constant $C$, which is independent of $k$, then we can use \eqref{eq:smoother-stability-proof3} together with \eqref{eq:smoother-stability-proof4} to prove the second inequality of \eqref{eq:smoother-stability-BPXs}.
		
		We aim to prove \eqref{eq:smoother-stability-proof4} using Jensen's operator inequality. To that end, we need to scale $R^s_k$, so that \eqref{eq:jensen-inequality} is applicable. Using the assumed stable decomposition of $v$, we note that
		\algns{
			\sum_{\nu \in \N_k}\inner{Q_{k,\nu}v}{Q_{k,\nu} v} &= \sum_{\nu \in \N_k} \inner{Q_{k,\nu} v}{v} \\
			&= \sum_{\nu, \eta \in \N_k}\inner{Q_{k,\nu} v}{v_\eta} \\
			&\leq \sum_{\nu, \eta \in \N_k}\norm{Q_{k,\nu} v}\norm{v_\eta}_{\Omega_{k,\nu}} \\
			&\leq \sum_{\eta \in \N_k}\left( \sum_{\nu \in \N_k} \norm{Q_{k,\nu}v}^2 \right)^\frac{1}{2}\left(\sum_{\nu \in \N_k}\norm{v_\eta}_{\Omega_{k,\nu}}^2\right)^\frac{1}{2}.
		} 
		Because of the shape-regularity of $\T_k$, we can bound
		\eqns{
			\sum_{\nu \in \N_k}\norm{v_\eta}_{\Omega_{k,\nu}}^2 \leq K_1 \norm{v_\eta}^2,
		}
		for some $K_1$ that is independent of $\eta$ and $k$.
		Thus,
		\algns{
			\sum_{\nu \in \N_k}\inner{Q_{k,\nu}v}{Q_{k,\nu} v} &\leq K_1 \sum_{\eta \in \N_k} \norm{v_\eta}^2 \\
			&\leq K_1 K_0 \norm{v}^2.
		}
		If we now define $\tilde{Q}_{k,\nu} = (K_0K_1)^{-\frac{1}{2}}Q_{k,\nu}$, and $\tilde{R}^s_k = \inv{(K_0 K_1)}R^s_k$, we have that
		\eqns{
			\sum_{\nu \in \N_k}\inner{\tilde{Q}_{k,\nu}v}{\tilde{Q}_{k,\nu} v} \leq \norm{v}^2 \text{ and } \inner{\tilde{R}^s_k v}{v} = \sum_{\nu \in \N_k} \inner{A^{-s}_{k,\nu}\tilde{Q}_{k,\nu} v}{\tilde{Q}_{k,\nu} v}.
		}
		We can now use Jensen's operator inequality \eqref{eq:jensen-inequality}, together with an argument analogous to that in the proof of Lemma \ref{lem:subspace-As-estimate} to get
		\algns{
			R^s_k &= K_0K_1 \tilde{R^s_k} \\
			&\leq K_0 K_1 (\tilde{R}^1_k)^s \\
			&= (K_0 K_1)^{1-s}(R^1_k)^s
		}
		This, together with \eqref{eq:smoother-stability-proof3}, proves the second inequality of \eqref{eq:smoother-stability-BPXs} with $C_2 = (K_0K_1)^{1-s}C^s$.
	\end{proof}
	
	We observe that the proof of Lemma \ref{lem:smoother-stability-verification}, shows that if the decomposition $V_k = \sum_{\eta \in \N_k}V_{k,\nu}$ is stable in both $L^2$- and $H_0^1$-norms, then it is also stable in the fractional norm $\norm{\cdot}_{s,k}$. That is, if there are constants $c_0, c_1 > 0$ so that
	\eqns{
		\inner{A^s_k v}{v} \leq c_s\inner{\inv{(R^s_k)}v}{v}, \quad \forall v\in V_k,
	}
	with $s=0$ and $s=1$, then the same holds for every $s\in [0,1]$, with $c_s = c_0^{1-s}c_1^s$. In this way, the smoother defined by \eqref{eq:Rk-definition} is the natural interpolation between the corresponding smoothers for $s=0$ and $s=1$.
	
	In our current context, where $V_k$ is the space of continuous, piecewise linear functions relative to $\T_k$, the assumption in Lemma \ref{lem:smoother-stability-verification} can be verified in the following manner.
	Fix $k$ and $v \in V_k$. Let $\setof{\theta_\nu}_{\nu \in \N_k}$ be a partition of unity subordinate to $\setof{\Omega_{k,\nu}}_{\nu \in \N_k}$, and let $\pi_k$ denote the nodal interpolant on $V_k$. We then set $v_\nu = \pi_k \theta_\nu v \in V_{k,\nu}$, in which case we see that $v = \sum_{\nu \in \N_k} v_\nu$.
	Furthermore, we have that
	\eqn{
		\label{eq:CG1-L2-stable-decomposition}
		\algnd{
			\sum_{\nu \in \N_k}\norm{v_\nu}^2 &=\sum_{\nu \in \N_k} \int_{\Omega_{k,\nu}} |v_\nu|^2 \intd x \\
			&\leq \sum_{\nu \in \N_k} \norm{v}^2_{\Omega_{k,\nu}} \\
			&\leq (n+1)\norm{v}^2, 
		}
	}
	where the last inequality follows by the fact that no point in $\Omega$ is contained in more than $n+1$ subdomains $\Omega_{k,\nu}$. Thus, \eqref{eq:L2-stable-decomposition} holds with $K_0 = n+1$.
	
	As noted in \cite[Remark 5.1]{bramble1987newconvergence} the $\alpha$ in the approximation and regularity assumption \eqref{ass:approximation-property} is closely related to the elliptic regularity of the continuous problem. Therefore, we make the following assumption:
	\begin{assumption}
		\label{ass:elliptic-regularity}
		There is an $\alpha \in (0,1]$ so that $A$ is a bounded operator from $H^1_0(\Omega) \bigcap H^{1+\alpha}(\Omega)$ to $H^{-1+\alpha}(\Omega)$, and  $A^{-1}$ is a bounded operator from $H^{-1+\alpha}(\Omega)$ to $H^1_0(\Omega) \bigcap H^{1+\alpha}(\Omega)$.
	\end{assumption}
	Assumption \ref{ass:elliptic-regularity} is standard for proving condition \eqref{ass:approximation-property} in the case of $s=1$ (cf. for instance \cite{bramble1987newconvergence}). In \cite[Thm 4.3, and Rem. 4.1]{bonito2015numerical} Bonito et al. used Assumption \ref{ass:elliptic-regularity} to prove the error estimate 
	\eqns{
		\label{eq:bonito-error-estimate}
		\norm{(A^{-s}-A_k^{-s}Q_k)f} \leq  C h_k^{2s}\norm{f}, \quad \forall f \in L^2(\Omega),
	}
	when $\alpha > s$. By the triangle inequality and the bounded refinement hypothesis it then follows that
	\eqn{
		\label{eq:bonito-error-estimate2}
		\norm{(A^{-s}_{k}-A_{k-1}^{-s}Q_{k-1})f} \leq  C h_k^{2s}\norm{f}, \quad \forall f \in V_k,
	}
	for each $k$.
	This estimate is sufficient to verify \eqref{ass:approximation-property} in our current context. The result is stated in the following Lemma.
	\begin{lemma}
		\label{lem:approximation-property}
		Assume that Assumption \ref{ass:elliptic-regularity} is satisfied with $\alpha > s$. Then there is a constant $C_3 > 0$, so that for every $k$
		\eqn{
			\label{eq:approximation-property-BPXs}
			\inner{A^s_k(I-P^s_{k,k-1})v}{v} \leq C_{3} \frac{\norm{A^s_k v}^2}{\lambda^s_k}.
		}
	\end{lemma}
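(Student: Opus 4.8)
The plan is to reduce the claimed bound \eqref{eq:approximation-property-BPXs} directly to the Bonito et al.\ estimate \eqref{eq:bonito-error-estimate2}, once the operator $A^s_k(I-P^s_{k,k-1})$ has been put into a convenient form. First I would unpack the definition \eqref{eq:Ps_k-definition}: because $Q_{k-1}$ is the $L^2$-projection onto $V_{k-1}$, the relation $\inner{A^s_{k-1}P^s_{k,k-1}v}{w}=\inner{A^s_k v}{w}=\inner{Q_{k-1}A^s_k v}{w}$, valid for every $w\in V_{k-1}$, gives $A^s_{k-1}P^s_{k,k-1}v=Q_{k-1}A^s_k v$, so that
\[
	P^s_{k,k-1}=A^{-s}_{k-1}Q_{k-1}A^s_k .
\]

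Next, fixing $v\in V_k$ and setting $w:=A^s_k v\in V_k$ (so $v=A^{-s}_k w$ and, on $V_k$, $A^{-s}_kw=A^{-s}_kQ_kw$), I would use the self-adjointness of $A^s_k$ together with the identity above to rewrite
\[
	\inner{A^s_k(I-P^s_{k,k-1})v}{v}=\inner{(I-P^s_{k,k-1})v}{w}=\inner{\left(A^{-s}_k-A^{-s}_{k-1}Q_{k-1}\right)w}{w},
\]
since $(I-P^s_{k,k-1})v=A^{-s}_kw-A^{-s}_{k-1}Q_{k-1}w$. The Cauchy--Schwarz inequality followed by \eqref{eq:bonito-error-estimate2}, which holds precisely because Assumption \ref{ass:elliptic-regularity} is in force with $\alpha>s$, then yields
\[
	\inner{A^s_k(I-P^s_{k,k-1})v}{v}\leq\norm{\left(A^{-s}_k-A^{-s}_{k-1}Q_{k-1}\right)w}\,\norm{w}\leq C h_k^{2s}\norm{A^s_k v}^2 .
\]

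It then remains to convert the factor $h_k^{2s}$ into $\inv{(\lambda^s_k)}$. By the standard inverse inequality on the quasi-uniform mesh $\T_k$, the largest eigenvalue of $A_k$ satisfies $\lambda^1_k\leq c\,h_k^{-2}$, hence $\lambda^s_k=(\lambda^1_k)^s\leq c^s h_k^{-2s}$, i.e.\ $h_k^{2s}\leq c^s\inv{(\lambda^s_k)}$. Substituting this into the previous display produces \eqref{eq:approximation-property-BPXs} with $C_3=c^sC$.

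I do not anticipate a genuine obstacle: the analytic substance is carried entirely by the cited estimate \eqref{eq:bonito-error-estimate2}, and everything else is bookkeeping. The points that need a little care are the algebraic identification $P^s_{k,k-1}=A^{-s}_{k-1}Q_{k-1}A^s_k$, the verification that \eqref{eq:bonito-error-estimate2} is applied to the element $w=A^s_k v$ of $V_k$ (it is, so that $Q_k w=w$), and the passage from $h_k^{2s}$ to $\inv{(\lambda^s_k)}$, which is the familiar mesh scaling used throughout multigrid analysis. It is worth remarking that this argument in fact establishes \eqref{ass:approximation-property} with the sharp exponent $\alpha=1$, even though the underlying elliptic regularity exponent may be any value in $(s,1]$; this is what ultimately limits the $J$-dependence in the spectral equivalence bound from Theorem \ref{thm:bpx-spectral-equivalence} to a single factor of $J$.
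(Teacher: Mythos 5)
Your proof is correct and follows essentially the same route as the paper: identify $P^s_{k,k-1}=A^{-s}_{k-1}Q_{k-1}A^s_k$, rewrite the quadratic form as $\inner{(A^{-s}_k-A^{-s}_{k-1}Q_{k-1})A^s_kv}{A^s_kv}$, apply Cauchy--Schwarz together with \eqref{eq:bonito-error-estimate2}, and convert $h_k^{2s}$ to $(\lambda^s_k)^{-1}$ via the inverse inequality on the quasi-uniform mesh. Your closing observation that this verifies \eqref{ass:approximation-property} with exponent $\alpha=1$ is also consistent with how the paper uses the lemma in Theorem \ref{thm:BPXs-spectral-equivalence}.
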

	\begin{proof}
		From the definition of $P^s_{k,k-1}$ in \eqref{eq:Ps_k-definition}, 
		\eqns{
			I - P^s_{k,k-1} = I - A^{-s}_{k-1}Q_{k-1}A^{s}_k = (A^{-s}_k - A^{-s}_{k-1}Q_{k-1})A^s_k,
		}
		and so, for any $v \in V_k$
		\eqns{
			\inner{A^s_k(I-P^s_{k,k-1})v}{v} \leq \inner{(A^{-s}_k - A^{-s}_{k-1}Q_{k-1})A^s_k v}{A^s_k}.
		}
		Using Cauchy-Schwarz inequality together with the error estimate \eqref{eq:bonito-error-estimate2}, we get
		\eqn{
			\label{eq:approximation-property-BPXs-proof1}
			\inner{A^s_k(I-P^s_{k,k-1})v}{v} \leq \norm{(A^{-s}_k - A^{-s}_{k-1}Q_{k-1})A^s_k v} \norm{A^s_k v}  \leq C h_k^{2s} \norm{A^s_k v}^2.
		}
		By the quasi-uniformity of the mesh and $h_k^2 \leq C \lambda_k^{-1}$ it follows that $h_k^{2s} \leq C\lambda_k^{-s}$ . Using this in \eqref{eq:approximation-property-BPXs-proof1} completes the proof.
	\end{proof}
	
	We are finally in a position to prove the main theorem of this section.
	\begin{theorem}
		\label{thm:BPXs-spectral-equivalence}
		Let Assumption \ref{ass:elliptic-regularity} be satisfied with $\alpha > s$. Then, for $s \in [0,1]$ with $B_h^s$ defined by \eqref{eq:Rk-definition} and \eqref{eq:BPXs-preconditioner-definition} satisfies
		\eqn{
			\label{eq:BPXs-spectral-equivalence}
			C_1 \inv{C_3}\inner{A_h^s v}{v} \leq \inner{B_h^s A_h^s v}{A_h^s v} \leq C_2J\inner{A_h^s v}{v}, \quad \forall v \in V,
		}
		where $C_1$, $C_2$, and $C_3$ are the same as in Lemmas \ref{lem:smoother-stability-verification} and \ref{lem:approximation-property}.
	\end{theorem}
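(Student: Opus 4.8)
The plan is to obtain Theorem~\ref{thm:BPXs-spectral-equivalence} as a direct instance of the abstract Theorem~\ref{thm:bpx-spectral-equivalence}, applied to the nested hierarchy $V_1\subset\cdots\subset V_J=V$ with the identifications $A_k\leftarrow A^s_k$, $R_k\leftarrow R^s_k$, $P_{k,k-1}\leftarrow P^s_{k,k-1}$, $Q_k\leftarrow Q_k$, and $B\leftarrow B_h^s$. Note that although the spaces $V_k$ are nested, the operators $A^s_k$ are not (since $A^s_{k-1}\neq\adjoint{I}A^s_hI$ in general), so this is exactly the non-nested situation the abstract theorem was designed for. It then remains only to verify assumptions \eqref{ass:noninheritance}, \eqref{ass:smoother-stability}, and \eqref{ass:approximation-property} in the present context, and each of these is furnished by one of the preceding results.

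First I would check \eqref{ass:noninheritance}: for $v\in V_{k-1}$ one needs $\inner{A^s_kv}{v}\le\inner{A^s_{k-1}v}{v}$. Since $V_{k-1}\subset V_k$ and, by \eqref{eq:MhAh-inheritance}, the coarse-level operator is the compression $A_{k-1}=\adjoint{I}A_kI$ of the fine one along the inclusion $I$, Lemma~\ref{lem:subspace-As-estimate} applied at this pair of levels gives precisely $\adjoint{I}A^s_kI\le A^s_{k-1}$, which is the stated inequality. Next, \eqref{ass:smoother-stability} is the conclusion of Lemma~\ref{lem:smoother-stability-verification}, provided the $L^2$-stable subspace decomposition hypothesis \eqref{eq:L2-stable-decomposition} holds; for continuous piecewise linear elements this is supplied by the partition-of-unity construction $v_\nu=\pi_k\theta_\nu v\in V_{k,\nu}$, which yields \eqref{eq:CG1-L2-stable-decomposition} with $K_0=n+1$. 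Hence \eqref{ass:smoother-stability} holds with the constants $C_1$, $C_2$ produced by that lemma, and with $\lambda_k$ there being the largest eigenvalue $\lambda^s_k=(\lambda^1_k)^s$ of the level operator $A^s_k$, exactly as the lemma is phrased.

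The only point deserving a comment, rather than an obstacle, is the approximation property \eqref{ass:approximation-property}. Lemma~\ref{lem:approximation-property} delivers the \emph{sharper} bound $\inner{A^s_k(I-P^s_{k,k-1})v}{v}\le C_3\,\norm{A^s_kv}^2/\lambda^s_k$, which is precisely \eqref{ass:approximation-property} specialized to $\alpha=1$ (the interpolation exponents $\alpha$ and $1-\alpha$ degenerate to $1$ and $0$). This is legitimate because the hypothesis $\alpha>s$ of Lemma~\ref{lem:approximation-property}, inherited from Assumption~\ref{ass:elliptic-regularity}, is what licenses the Bonito--Pasciak error estimate \eqref{eq:bonito-error-estimate2} that drives the lemma; once that estimate is available, no fractional power of $J$ is needed in the approximation bound.

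Finally I would substitute $\alpha=1$, $A=A^s_h$, and $B=B_h^s$ into \eqref{eq:bpx-spectral-equivalence}: the factor $J^{1-1/\alpha}$ collapses to $1$, so the lower spectral bound becomes $C_1C_3^{-1}\inner{A^s_hv}{v}$ with no dependence on the number of levels, while the upper bound retains the harmless factor $C_2J$ coming from the additive (rather than multiplicative) nature of $B_h^s$. This is exactly \eqref{eq:BPXs-spectral-equivalence}, with $C_1$, $C_2$, $C_3$ being the constants from Lemmas~\ref{lem:smoother-stability-verification} and~\ref{lem:approximation-property}. The one thing to watch is bookkeeping of which "$\lambda_k$" appears where — it is always the top eigenvalue of the level operator $A^s_k$, i.e. $\lambda^s_k$ — but since both lemmas are already stated in those terms, the application is clean.
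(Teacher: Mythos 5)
Your proposal is correct and follows exactly the route the paper takes: the paper's own proof is the one-line statement that the result is a straightforward application of Theorem \ref{thm:bpx-spectral-equivalence} together with Lemmas \ref{lem:smoother-stability-verification} and \ref{lem:approximation-property}, with the verification of \eqref{ass:noninheritance} via Lemma \ref{lem:subspace-As-estimate} and of \eqref{eq:L2-stable-decomposition} via the partition-of-unity argument carried out in the surrounding text of Section \ref{sec:discrete-bpx}. Your observation that Lemma \ref{lem:approximation-property} realizes \eqref{ass:approximation-property} with the abstract exponent equal to $1$ (distinct from the regularity parameter $\alpha>s$ of Assumption \ref{ass:elliptic-regularity}), so that $J^{1-1/\alpha}$ collapses to $1$ in the lower bound, is precisely the bookkeeping the paper relies on implicitly.
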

	
	\begin{proof}
		This result is a straightforward application of Theorem \ref{thm:bpx-spectral-equivalence} together with Lemmas \ref{lem:smoother-stability-verification} and \ref{lem:approximation-property}.
	\end{proof}
	Theorem \ref{thm:BPXs-spectral-equivalence} shows that the condition number $K(B_h^s A_h^s) \leq C J$, and so increases linearly with the number of mesh levels, but is independent of $h$.
	
	With less regularity of the domain, we can still prove a slightly weaker form of spectral equivalence. By the assumed quasi-uniformity of $\T_k$, we have for $k=2,\ldots,J$ that
	\eqns{
		\norm{(I-Q_{k-1})v}^2 \leq C h_k^2 \norm{v}_{1}^2, \quad \forall v \in V_k.
	}
	This, together with the boundedness of $I-Q_{k-1}$ and interpolation theory, yields
	\eqns{
		\norm{(I-Q_{k-1})v}^2 \leq C h_k^{2s}\norm{v}_{s,k}^2 \leq C_4 \lambda_k^{-s}\norm{v}_{s,k}^2,
	}
	for some constant $C_4$, independent of $k$.
	By the discussion in Remark \ref{rem:bpx-weak-spectral-equivalence}, we get that
	\eqn{
		\label{eq:BPXs-weak-spectral-equivalence}
		\inv{C_4}C_1 \inv{J}\inner{A^s_h v}{v} \leq \inner{B^s_h A^s_h v}{A^s_h v} \leq C_2 J\inner{A^s_h v}{v}, \quad \forall v \in V_h,
	}
	and the condition number is bounded by $K(B^s_h A^s_h) \leq C J^2$.

	\section{Preconditioner when $s \in [-1,0]$}
	\label{sec:preconditioning-negative-s}
	For $s \in [-1,0]$, the large eigenvalues of $A^s_h$ correspond to smooth functions. In a multilevel setting this means that neither relaxation nor coarse grid correction will reduce the oscillatory components of the error. As such, we cannot expect a direct multigrid approach to work. Moreover, when $s <0$ the Löwner-Heinz' and Jensen's operator inequalities in \eqref{eq:Loewner-Heinz} and \eqref{eq:jensen-inequality} fail to hold, and the argument of Section \ref{sec:discrete-bpx} is no longer valid. In this section, we will therefore investigate an alternative approach for constructing  preconditioners.
	
	We will base the preconditioner for $A^s_h$ when $s$ is negative on our previously defined preconditioners $B^t_h$ for $t \in [0,1]$ together with the multiplicative decomposition 
	of $A_h$  
	\eqn{
		\label{eq:As-neg-decomp}
		A^{-s}_h = A_h^{-\frac{1+s}{2}}A_h A_h^{-\frac{1+s}{2}}.
	}
	We have for every $u \in V_h$ and $t \in \reals$ that
	\eqns{
		\norm{u}_{-s,h} = \norm{A^{-\frac{t+s}{2}}_hu}_{t,h}.
	}
	The specific form we will use below is  
	\eqn{
		\label{eq:As-neg-mapping-property}
		\norm{u}_{-\frac{1+s}{2}+\beta,h} = \norm{A^{-\frac{1+s}{2}}_h u}_{\frac{1+s}{2}+\beta,h},
	}
	which is valid for any $\beta \in \reals$.
	
	Replacing the left- and rightmost factor of the right hand side in \eqref{eq:As-neg-decomp} with a spectrally equivalent preconditioner 
	$B^{\frac{1+s}{2}}_h$, yields a symmetric positive definite operator
	\eqn{
		\label{eq:tildeBs-def}
		\tilde{B}^{s}_h := B^{\frac{1+s}{2}}_h A_h B^{\frac{1+s}{2}}_h.
	}
	
	We want $\tilde{B}^s_h$ to be spectrally equivalent to $A^{-s}_h$. That is, there exist constants $C_1, C_2$ so that for every $u \in V_h$,
	\eqn{
		\label{eq:tildeBs-spectral-equivalence}
		C_1\inner{A^s_h u}{u} \leq \inner{\tilde{B}^s_h A^s_h u}{A^s_h u} \leq C_2 \inner{A^s_h u}{u}
	}
	holds. By the definition of $\tilde{B}^s_h$,
	\eqns{
		\inner{\tilde{B}^s_h A^s_h u}{A^s_h u} = \inner{A_hB^{\frac{1+s}{2}}_hA^s_h u}{B^{\frac{1+s}{2}}_hA^s_h u} = \norw{B^{\frac{1+s}{2}}_hA^s_h u}{1}^2,
	}
	and since $\inner{A^s_h u}{u} = \norw{u}{s,h}^2$, we see that the spectral equivalence in \eqref{eq:tildeBs-spectral-equivalence} is equivalent to
	\eqn{
		\label{eq:tildeBs-norm-equivalence}
		C_1^{\frac{1}{2}}\norm{u}_{s,h} \leq \norm{B_h^{\frac{1+s}{2}}A^s_h u}_1 \leq C_2^{\frac{1}{2}} \norm{u}_{s,h}, \quad \forall u \in V_h.
	}
	Using the preconditioner described in section \ref{sec:discrete-bpx}, we have by the spectral equivalence established in Theorem \ref{thm:BPXs-spectral-equivalence} that there are constant $C_1, C_2 > 0$ so that
	\eqn{
		\label{eq:Bs-norm-spectral-equivalence}
		C_1 \norm{u}_{-\frac{1+s}{2},h} \leq \norm{B_h^{\frac{1+s}{2}}u}_{\frac{1+s}{2},h} \leq C_2 J \norm{u}_{-\frac{1+s}{2},h}, \quad u \in V_h.
	}

	We assume now some additional regularity on $B_h^{\frac{1+s}{2}}$, similar to \eqref{eq:As-neg-mapping-property}. That is, for some $\beta$, we have the norm equivalence
	\eqn{
		\label{eq:tildeBs-norm-equivalence-regularity}
		C_1 \norm{u}_{-\frac{1+s}{2}+\beta,h} \leq \norm{B_h^{\frac{1+s}{2}}u}_{\frac{1+s}{2}+\beta,h} \leq C_2 J \norm{u}_{-\frac{1+s}{2}+\beta,h}.
	}
	
	In particular, with $\beta = \frac{1-s}{2} \in \left[\frac{1}{2}, 1 \right]$, and replacing $u$ by $A^s_h u$  in \eqref{eq:tildeBs-norm-equivalence-regularity} we recover \eqref{eq:tildeBs-norm-equivalence} and the spectral equivalence \eqref{eq:tildeBs-spectral-equivalence}.
	We note also that if we assume the additional regularity of \eqref{eq:tildeBs-norm-equivalence-regularity}, we can bound the condition number of $\tilde{B}^s_h A^s_h$, $K(\tilde{B}^s_hA^s_h)$, by
	\eqn{
		\label{eq:tildeBs-condition-number-bound}
		K(\tilde{B}^s_h A^s_h) \leq K(B_h^{\frac{1+s}{2}}A^{\frac{1+s}{2}}_h)^2.
	}

	\section{Implementational concerns}
	\label{sec:implementation}
	The discrete operators discussed so far are related to, but are not the same as the matrices used in the implementation.  
	In this section we will discuss how to implement these operators.
	We begin by discussing the matrix representation of the discrete fractional operators. 
	We refer also to \cite{mardal2011preconditioning} for more details. 
	While 
	the discrete fractional operators satisify the group property $A_h^s A_h^t = A_h^{s+t}$, their
	matrix representations do not. In particular, for $t=-s$, $A_h^s A_h^{-s} = I_h$ and the finite element matrix
	representation of the identity is the mass matrix. Hence, in order to provide a precise description 
	of the interpolation of the involved matrices, we let $\setof{ \phi_h^i}_{i=1}^{N_h}$ be the standard nodal basis for $V_h$, and we introduce the operators   
	$\pi_h, \mu_h: V_h\to \reals^{N_h}$, defined by
	\eqn{
		\label{eq:matvec-representations}
		\algnd{
			v &= \sum_{i=1}^{N_h}\left(\pi_h v\right)_i \phi^i_h, \quad \text{ and } \\
			(\mu_h v)_i &= \inner{v}{\phi^i_h}, \quad i=1,\ldots,N_h.
		}
	}
	Subsequently, we will refer to $\pi_h v$ and $\mu_h v$ as the primal- and dual vector representation of $v$, respectively. The
	primal representation is sometimes called the nodal representation. 
	We then have that
	\eqn{
		\label{eq:vector-representation-adjoints}
		\mu_h^* = \pi_h^{-1}, \quad \text{ and } \quad  \pi_h^* = \mu_h^{-1}
	}
	To see this, take $\mat{v} \in \reals^{N_h}$, and $u \in V_h$. Then,
	\eqns{
		\algnd{
			\inner{\mu_h^* \mat{v}}{u} &= \inner{\mat{v}}{\mu_h u}_{l^2} \\
			&= \sum_{i=1}^{N_h} \mat{v}_i \inner{u}{\phi_h^i} \\
			&= \inner{u}{\sum_{i=1}^{N_h}\mat{v}_i\phi_h^i} \\
			&= \inner{u}{\pi_h^{-1}\mat{v}},
		}
	}
	where $\inner{\cdot}{\cdot}_{l^2}$ is the standard Euclidean inner product on $\reals^{N_h}$. This proves the first identity in \eqref{eq:vector-representation-adjoints}. The second identity is proven similarly.
	
	Using these operators, the stiffness matrix can then be expressed as  
	\[
	\mat{A}_h = \mu_h A_h \pi_h^{-1}, \quad \mbox{and} \quad (\mat{A}_h)_{i,j} = (A_h \phi_h^j, \phi_h^i ), \quad 1\le i, j \le N_h,   
	\]
	and the mass matrix is 
	\[
	\mat{M}_h = \mu_h I_h \pi_h^{-1} = \mu_h \pi_h^{-1}, \quad \mbox{and} \quad (\mat{M}_h)_{i,j} = (M_h \phi_h^j, \phi_h^i ), \quad 1\le i, j \le N_h .   
	\]
	We see that for both the stiffness- and mass matrix, a matrix-vector product takes primal vectors as input and returns dual vectors.
	
	For the matrix realization of $A^s_h$, let $\setof{(\lambda_i, \mat{u}_i)}_{i=1}^{N_h} \subset \reals \times \reals^{N_h}$ be the eigenpairs of the generalized eigenvalue problem
	\eqns{
		\label{eq:generalized-eigenvalue-problem-matrices}
		\mat{A}_h \mat{u}_i = \lambda_i \mat{M}_h \mat{u}_i,
	}
	normalized so that $\mat{u}_j^{\top} \mat{M}_h \mat{u}_i = \delta_{i,j}$. Setting $\mat{\Lambda}_h = \operatorname{diag} (\lambda_1,\ldots,\lambda_{N_h})$, and $\mat{U} = [\mat{u}_1,\ldots, \mat{u}_{N_h}]$, we have that
	\eqn{
		\label{eq:orthonormal-eigenvectors}
		\mat{U}^{\top} \mat{M}_h \mat{U} = I, \quad \text{ and } \quad \mat{U}^{\top}\mat{A}_h \mat{U} = \mat{\Lambda}_h.
	}
	We then define
	\eqn{
		\label{eq:Ash-matrix-realization}
		\mat{A}_h^s = \left(\mat{M}_h \mat{U} \right)\mat{\Lambda}_h^s \left(\mat{M}_h \mat{U} \right)^{\top}.
	}
	One can verify that the entries of $\mat{A}^s_h$ satisfy
	\eqns{
		(\mat{A}_h^s)_{i,j} = \inner{A^s_h \phi_h^j}{\phi_h^i},
	}
	in which case $\mat{A}^s_h = \mu_h A^s_h \pi_h^{-1}$.
	Using \eqref{eq:orthonormal-eigenvectors} we are also able to see that
	\eqn{
		\label{eq:Ash-matrix-inverse}
		\inv{(\mat{A}_h^s)} = \mat{U}\mat{\Lambda}_h^{-s} \mat{U}^{\top} = \pi_h A^{-s}_h \mu_h^{-1},
	}
	making it the matrix realization of $A^s_h$ viewed as an operator from $X_h$ to $\dual{X_h}$. However, the group properties 
	mentioned above only make sense when we consider $A^s_h$ as operators on $X_h$. Thus, we see that for the matrices
	\eqns{
		\pi_h A^s_h \inv{\pi_h} = (\pi_h \inv{\mu_h}) \mu_h A^s_h \inv{\pi_h} = \inv{\mat{M}_h} \mat{A}_h^s,
	}
	the group properties are satisfied. This can also be verified using the definition of $\mat{A}^s_h$ in \eqref{eq:Ash-matrix-realization}.
	
	Since matrix-vector products involving $\mat{A}^s_h$ take primal vectors as input and return dual vectors, the matrix realization of $B^s_h$ should take dual vectors as input and return primal vectors. Then the product $\mat{B}^s_h \mat{A}^s_h$ acts on primal vectors, and is thus suitable for a Krylov subspace method. See also \cite[Section 6]{mardal2011preconditioning} and \cite[Section 15]{bramble1993multigrid}. Therefore, we define
	\eqn{
		\label{eq:matBs-def}
		\mat{B}^s_h = \pi_h B^s_h \mu_h^{-1}.
	}
	
	To see how $\mat{B}^s_h$ is implemented, we begin by supposing that $\dim V_k = N_k$ for $k=1,\ldots,J$. Let $\setof{\phi_k^i}_{i=1}^{N_k}$ be bases for $V_k$, and we define operators $\pi_k, \mu_k : V_k \to \reals^{N_k}$ analogously to \eqref{eq:matvec-representations}. We then define mass- and stiffness matrices on level $k$ as $\mat{M}_k = \mu_k \pi_k^{-1}$ and $\mat{A}_k = \mu_k A_k \pi_k^{-1}$, respectively.
	
	By assumption, for every $k$, $V_k \subset V_h$, and so there are matrices $\mat{I}_k: \reals^{N_k} \to \reals^{N_h}$ so that
	\eqns{
		\phi_k^i = \sum_{j=1}^{N_h} (\mat{I}_k)_{i,j}\phi_h^j, \quad i=1,\ldots,N_k.
	}
	In fact, $\mat{I}_k$ is the matrix realization of the inclusion operator $I_k: V_k \to V_h$, i.e. $\mat{I}_k = \pi_h I_k \pi_k^{-1}$. Using that $Q_k = I_k^*$ and \eqref{eq:vector-representation-adjoints} we have that the transpose of $\mat{I}_k$ satisfies
	\algns{
		\mat{I}_k^{\top} &= \left(\pi_h I_k \pi_k^{-1} \right)^* \\
		&= (\pi_k^{-1})^* Q_k \pi_h^* \\
		&= \mu_k Q_k \mu_h^{-1} \\
		&=: \mat{Q}_k,
	}
	which is the matrix realization of $Q_k$ in dual representation. Thus, for the matrix $\mat{B}^s_h$ we have that
	\eqn{
		\label{eq:matBs-impl-step1}
		\algnd{
			\mat{B}^s_h &=   \pi_h B_h^s \mu_h^{-1} \\ 
			&= \sum_{k=1}^J \pi_h I_k R_{k}^{s}  Q_k \mu_h^{-1} \\ 
			&= \sum_{k=1}^J (\pi_h I_k \pi_k^{-1}) (\pi_k R_k^s \mu_k^{-1}) (\mu_k Q_k \mu_h^{-1}) \\
			&= \sum_{k=1}^J \mat{Q}_k^{\top} \mat{R}^s_k \mat{Q}_k,
		}
	}
	where we define $\mat{R}^s_k = \pi_k R_k^s \mu_k^{-1}$ as the matrix realization of $R^s_k$. We see that due to \eqref{eq:Ash-matrix-inverse} $\mat{R}^s_1 = (\mat{A}^s_1)^{-1}$. For $k\geq2$ we define for $\nu \in \N_k$ operators $\pi_{k,\nu}, \mu_{k,\nu}: V_{k,\nu} \to \reals^{\dim V_{k,\nu}}$ and matrices $\mat{Q}_{k,\nu}: \reals^{N_k} \to \reals^{\dim V_{k,\nu}}$, similarly to the above. The matrix realization of $R^s_k$ then becomes
	\eqn{
		\label{eq:matRs-impl}
		\mat{R}^s_k = \sum_{\nu \in \N_k} \mat{Q}_{k,\nu}^{\top} (\mat{A}^s_{k,\nu})^{-1} \mat{Q}_{k,\nu}.
	}
	Here, $\mat{A}^s_{k,\nu} = \mu_{k,\nu} A^s_{k,\nu} \pi_{k,\nu}^{-1}$. By \eqref{eq:Ash-matrix-inverse}, the implementation of $\mat{R}^s_k$ will require solving many small eigenvalue problems. In the particular case of continuous, piecewise linear finite element functions, and subdomains $\Omega_{k,\nu}$ as described in Section \ref{sec:discrete-bpx}, the subspaces $V_{k,\nu}$ are one-dimensional. The matrix $\mat{R}^s_k$ is then diagonal, with entries
	\eqns{
		(\mat{R}^s_k)_{i,i} = \frac{1}{(\mat{M}_k)_{i,i}^{1-s}(\mat{A}_k)_{i,i}^s}, \quad i=1,\ldots,N_k.
	}
	That is, this is the smoother mentioned in the introduction. 
	
	Inserting \eqref{eq:matRs-impl} into \eqref{eq:matBs-impl-step1} we get
	\eqn{
		\label{eq:matBs-impl-final}
		\mat{B}^s_h = \mat{Q}_1^{\top} (\mat{A}^s_1)^{-1} \mat{Q}_1 + \sum_{k=2}^{J}\mat{Q}_k^{\top}\left(\sum_{\nu \in \N_k} \mat{Q}_{k,\nu}^{\top} (\mat{A}^s_{k,\nu})^{-1} \mat{Q}_{k,\nu} \right)\mat{Q}_k.
	}
	We end this section by showing how to implement $\tilde{B}^s_h$, when $s\in[-1,0]$. In this case, the matrix realization of $\tilde{B}^s_h$ can be found from $\mat{B}^{\frac{1+s}{2}}_h$ and $\mat{A}_h$ by
	\eqn{
		\label{eq:mattildeBs-impl}
		\algnd{
			\mat{\tilde{B}}^s_h &:= \pi_h \tilde{B}^s_h \mu_h^{-1} \\
			&= (\pi_h B^{\frac{1+s}{2}}_h \mu_h^{-1}) (\mu_h A_h \pi_h^{-1})(\pi_h B^{\frac{1+s}{2}}_h \mu_h^{-1}) \\
			&= \mat{B}^{\frac{1+s}{2}}_h \mat{A}_h \mat{B}^{\frac{1+s}{2}}_h.
		}
	}
	That is, $\tilde{B}^s_h$ is implemented as an application of $B^{\frac{1+s}{2}}_h$, followed by a multplication of the stiffness matrix and a second application of $B^{\frac{1+s}{2}}_h$.
	
	\section{Numerical experiments}
	\label{sec:numerical-experiments}
	In this section we present a series of numerical experiments that aim to validate the theoretical results we established in previous sections. We also present numerical results for the case when $s < 0$, using $\tilde{B}^s_h$, defined in \eqref{eq:tildeBs-def}, as preconditioner. Specifically, in Section \ref{sec:numerical-experiments-fracLap} we solve
	\eqns{
		A^s_h u = f,
	}
	using preconditioned conjugate gradient method with $B^s_h$ defined in \eqref{eq:BPXs-preconditioner-definition} as preconditioner. In Section \ref{sec:numerical-experiments-EMI}, we consider a coupled multidomain problem, where the weakly imposed continuity on the interface leads to a Lagrange multiplier in $H^{\pm\frac{1}{2}}$.
	
	The numerical experiments are conducted using random initial guess. Convergence in the iterative methods used is reached when the relative preconditioned residual, i.e. $\frac{\inner{Br_k}{r_k}}{\inner{Br_0}{r_0}}$, where $r_k$ is the residual at the $k$-th iteration and $B$ is the preconditioner, is below a given tolerance.
	\subsection{Preconditioning the fractional Laplacian}
	\label{sec:numerical-experiments-fracLap}
	In the first set of numerical experiments, we show the performance of the preconditioners $B^s_h$ and $\tilde{B}^s_h$, defined in \eqref{eq:BPXs-preconditioner-definition} and \eqref{eq:tildeBs-def}, respectively,
	depending on the sign of $s$ for the $A^s_h$ inner product. That is, for a given $f_h \in V_h$, we solve: Find $u_h \in V_h$ such that
	\eqn{
		\label{eq:numerex1}
		\inner{A^s_h u_h}{v} = \inner{f_h}{v}, \quad \forall v \in V_h,
	}
	where $s \in [-1,1]$. We take $\Omega = [0,1] \subset \reals$, and $\T_h$ is a uniform partition of $\Omega$ consisting of $N = \frac{1}{h}$ elements. $V_h$ is then the space of continuous, piecewise linear functions relative to $\T_h$ that vanish on $\pd \Omega$. We solve the linear system arising from \eqref{eq:numerex1} using preconditioned conjugate gradient, with $B^s_h$ as preconditioner if $s \geq 0$, and $\tilde{B}^s_h$ if $s < 0$. For $s \geq 0$, iteration counts and estimated condition numbers can be viewed in Table \ref{tab:fraclap_positive_s_bpx}. From these results we see that both the iteration counts and condition numbers stay uniformly bounded for each $s$.
	
	The analogous results for $s \leq 0$ can be seen in Table \ref{tab:fraclap_negative_s_bpx}. Here the situation is slightly more complicated. For each $s$, the iteration counts and condition numbers seem to increase for small $N$ (large $h$), but ultimately stay bounded when $N$ is increased. Worth noting is that the bound \eqref{eq:tildeBs-condition-number-bound} is relatively sharp. For instance, for $s = -1$, the preconditioner $\tilde{B}^s_h$ does two applications of $B^0_h$, and has estimated condition numbers around $193$. By Table \ref{tab:fraclap_positive_s_bpx}, $K(B^0_hA^0_h) \approx 13.8$, and so $K(\tilde{B}^{-1}A^{-1}_h) \approx K(B^0_hA^0_h)^2$. Similar relations holds for other values of $s \leq 0$.
	\begin{table}[t]
		\begin{tabular}{l | l l l l l }
			\diagbox{$s$}{$N$} & 32& 64& 128& 256& 512\\
			\hline
			$0.0$& $20(13.5)$& $25(13.6)$& $28(13.8)$& $29(13.8)$& $29(13.9)$\\ 
			$0.1$& $18(8.7)$& $21(8.9)$& $23(8.9)$& $24(8.9)$& $24(8.9)$\\ 
			$0.2$& $16(5.8)$& $18(6.4)$& $19(6.5)$& $21(6.5)$& $21(6.6)$\\ 
			$0.3$& $14(4.2)$& $15(4.7)$& $17(4.9)$& $18(5.0)$& $18(5.0)$\\ 
			$0.4$& $12(3.4)$& $14(3.7)$& $15(3.8)$& $15(3.9)$& $16(3.9)$\\ 
			$0.5$& $11(2.9)$& $12(3.0)$& $13(3.1)$& $13(3.1)$& $14(3.2)$\\ 
			$0.6$& $12(2.9)$& $13(3.0)$& $13(3.0)$& $14(3.1)$& $14(3.0)$\\ 
			$0.7$& $12(3.0)$& $13(3.0)$& $14(3.1)$& $14(3.1)$& $14(3.1)$\\ 
			$0.8$& $13(3.2)$& $14(3.3)$& $14(3.3)$& $14(3.3)$& $14(3.3)$\\ 
			$0.9$& $14(3.5)$& $15(3.6)$& $15(3.6)$& $15(3.6)$& $15(3.6)$\\ 
			$1.0$& $14(4.0)$& $16(4.1)$& $16(4.1)$& $16(4.1)$& $16(4.1)$\\ 
		\end{tabular}
		\caption{Numerical results for $(-\Delta)^s$ with nonnegative $s$. Table shows the number 
			of preconditioned conjugate gradient iterations until reaching error tolerance $10^{-15}$. Estimated condition numbers are shown inside parentheses. $N$ is number of elements on the finest mesh. $J = 5$ in all tests.}
		\label{tab:fraclap_positive_s_bpx}
	\end{table}
	\begin{table}[t]
		\begin{tabular}{l | l l l l l }
			\diagbox{$s$}{$N$} & 32& 64& 128& 256& 512\\
			\hline
			$-1.0$& $32(184.4)$& $47(192.4)$& $56(192.7)$& $64(193.8)$& $62(191.2)$\\ 
			$-0.9$& $28(119.0)$& $43(118.9)$& $50(120.5)$& $54(120.7)$& $55(119.9)$\\ 
			$-0.8$& $26(78.3)$& $37(82.6)$& $46(84.5)$& $48(83.8)$& $49(83.9)$\\ 
			$-0.7$& $25(53.0)$& $33(60.1)$& $40(61.9)$& $42(62.1)$& $45(61.5)$\\ 
			$-0.6$& $24(36.9)$& $31(43.8)$& $35(45.8)$& $38(46.2)$& $41(46.2)$\\ 
			$-0.5$& $22(26.8)$& $25(31.9)$& $30(34.3)$& $34(34.9)$& $38(35.1)$\\ 
			$-0.4$& $20(20.4)$& $24(24.8)$& $28(26.5)$& $32(27.0)$& $37(27.1)$\\ 
			$-0.3$& $17(16.1)$& $21(19.3)$& $27(20.7)$& $30(21.1)$& $34(21.1)$\\ 
			$-0.2$& $17(13.1)$& $21(15.3)$& $25(16.4)$& $29(16.7)$& $32(16.7)$\\ 
			$-0.1$& $16(11.0)$& $20(12.4)$& $23(13.2)$& $27(13.5)$& $29(13.5)$\\ 
			$0.0$& $14(9.4)$& $17(10.4)$& $20(11.0)$& $24(11.2)$& $27(11.1)$\\ 
		\end{tabular}
		\caption{Numerical results for $(-\Delta)^s$ with negative $s$. Table shows the number of preconditioned conjugate gradient iterations until reaching error tolerance $10^{-15}$. Estimated condition numbers are shown inside parentheses. $N$ is number of elements on the finest mesh. $J = 5$ in all tests.}
		\label{tab:fraclap_negative_s_bpx}
	\end{table}

	\subsection{Multidomain preconditioning}
	\label{sec:numerical-experiments-EMI}
	In this section we apply the multilevel algorithm \eqref{eq:BPXs-preconditioner-definition}
	to construct mesh independent preconditioners for a coupled multidomain problem
	originating from a geometrically accurate model of electric signal propagation
	in cardiac tissue, the EMI model, \cite{tveito2017cell}.
	We remark that the EMI model is simple in a sense that it is a single-physics problem where two elliptic equations  
	are coupled. However, the interface problems encountered here are identical to those 
	found in multiphysics applications, e.g. the couled Darcy-Stokes system \cite{layton2002coupling} or 
	the Stokes-Biot system \cite{Ambartsumyan2018}.
	
	
	Let $\Omega\subset\mathbb{R}^2$ be a bounded domain decomposed into two
	non-overlapping subdomains $\Omega_1$, $\Omega_2$ with a common interface
	$\Gamma=\partial\Omega_1\bigcap\partial\Omega_2$ forming a closed curve.
	Motivated by the application the subdomain $\Omega_1$ is designated as the
	\emph{exterior} domain, i.e. $\partial\Omega_2\bigcap\partial\Omega=\emptyset$.
	With $\epsilon > 0$ and $n$ the outer normal of the exterior domain we now
	aim to solve 
	\begin{equation}
	\label{eq:emi_system}
	\begin{aligned}
	u_1-\Delta u_1 &= f_1, & x\in \Omega_1,\\
	u_2 -\Delta u_2&= f_2, & x\in \Omega_2,\\
	n\cdot \nabla u_1 - n\cdot\nabla u_2 &= 0, & x\in \Gamma,\\
	\epsilon (u_1 - u_2) + n\cdot \nabla u_1 &= g, & x\in\Gamma.
	\end{aligned}
	\end{equation}
	The choice of boundary conditions for \eqref{eq:emi_system} shall be discussed
	shortly. We remark that in the EMI model the parameter $\epsilon$ plays a role
	of inverse time step and thus algorithms robust with respect to the parameter
	are of interest. However, here the system will be considered only for a fixed
	choice of the parameter. 
	
	Considering \eqref{eq:emi_system} with homogeneous Neumann boundary conditions
	$n\cdot\nabla u_1=0$ on $\partial\Omega$ and letting $W_1=H^1(\Omega_1)\times H^1(\Omega_2)\times (H^{-1/2}(\Gamma)\bigcap \epsilon^{-1/2}L^2(\Gamma))$
	the variational formulation of \eqref{eq:emi_system} defines an operator
	$\mathcal{A}_1: W_1\rightarrow W'_1$
	\begin{equation}\label{eq:mortarA}
	\mathcal{A}_1 = \begin{pmatrix}
	I-\Delta  & 0 & T_{1}^{*}\\
	0 & I-\Delta  & -T_{2}^{*}\\
	T_{1} & -T_{2} & -\epsilon^{-1}I_{}
	\end{pmatrix}, 
	\end{equation}
	where $T_i$, $T_i v=v|_{\Gamma}$ for $v\in C(\bar{\Omega}_i)$, $i=1, 2$
	are the trace operators on $H^1(\Omega_1)$ and $H^1(\Omega_2)$, respectively.
	Tveito et al. \cite{tveito2017cell} further discuss a mixed formulation of the
	system \eqref{eq:emi_system} where additional unknowns $\sigma_i=-\nabla u_i$, $i=1,2$
	are introduced. If homogeneous Dirichlet boundary conditions $u_1=0$ on $\partial\Omega$
	are assumed the mixed formulation leads to operator
	$\mathcal{A}_2: W_2\rightarrow W'_2$
	\begin{equation}\label{eq:hdivA}
	\mathcal{A}_2 = \begin{pmatrix}
	I & \nabla & T^{*}\\
	-\nabla\cdot& -I & 0\\
	T & 0 & -\epsilon I
	\end{pmatrix},
	\end{equation}
	with $W_2=H(\text{div}, \Omega)\times L^2(\Omega) \times (H^{1/2}(\Gamma)\bigcap \epsilon^{1/2} L^2(\Gamma))$
	and $T$ the normal trace operator $T\sigma= \sigma|_{\Gamma}\cdot n$ for $v\in \left[C(\Omega)\right]^2$. We remark 
	that operators $\mathcal{A}_1$ and $\mathcal{A}_2$ also arise naturally in the analysis of non-overlapping domain decomposition methods 
	for second order elliptic problems in the primal \cite{wohlmuth2000mortar} and mixed formulation \cite{cowsar1995balancing} 
	respectively.
	
	Assuming that the operators $\mathcal{A}_1$ and $\mathcal{A}_2$ are isomorphisms
	on their respective spaces\footnote{
		The proof of this result as well as stable finite element discretization of the
		problem are subject of current work and will be reported elsewhere.
		We remark that operator $\mathcal{A}_1$ in the limit case $\epsilon=\infty$ has
		been studied in \cite{lamichhane2004mortar} in the context of mortar finite element method.
	}  
	the preconditioners can be established within the framework of operator
	preconditioning \cite{mardal2011preconditioning}. In particular, the Riesz
	map preconditioner for \eqref{eq:mortarA} is 
	\begin{equation}\label{eq:mortarB}
	\mathcal{B}_1=\begin{pmatrix}
	I-\Delta & &\\
	& I-\Delta  & \\
	& & \epsilon^{-1}I + (-\Delta + I)^{-1/2}
	\end{pmatrix}^{-1},
	\end{equation}
	while \eqref{eq:hdivA} will be preconditioned by 
	\begin{equation}\label{eq:hdivB}
	\mathcal{B}_2=\begin{pmatrix}
	I - \nabla \nabla\cdot & &\\
	& I & \\
	& & \epsilon I + (-\Delta + I)^{1/2}
	\end{pmatrix}^{-1}.
	\end{equation}
	Note that the operator sums in $\mathcal{B}_1$, $\mathcal{B}_2$ are due to
	the fact that the interface spaces are intersection spaces \cite{bergh}.
	
	In order to simplify the setting and focus only on the fractional operators
	in the preconditioners we remove the parameter dependence from the problems by
	settting $\epsilon=\infty$ in \eqref{eq:mortarA}, \eqref{eq:mortarB} and similarly
	$\epsilon=0$ for \eqref{eq:hdivA}, \eqref{eq:hdivB}. In turn, the interface spaces
	reduce to $H^{-1/2}(\Gamma)$ and $H^{1/2}(\Gamma)$ respectively and the multilevel algorithm
	is directly applicable for the related interface problems which now involve
	the operator $I-\Delta$, cf. the Laplacian operator in the previous sections. 
	
	Robustness of $\mathcal{B}_1$, $\mathcal{B}_2$, and in particular
	the fractional Sobolev space preconditioner, are finally demonstrated by
	observing the iteration counts of the preconditioned MinRes method.
	In the experiments we let $\Omega=\left[0, 1\right]^2$ and $\Omega_2=\left[0.25, 0.75\right]^2$.
	The finite element discretization of $W_1$ uses continuous linear Lagrange elements.
	For $W_2$ the first subspace is constructed from the $H(\text{div})$-conforming 
	lowest order Raviart-Thomas elements and the remaining subspaces use piecewise 
	constant elements. The discrete preconditioners shall use off-the-shelve methods for the first two
	blocks. More specifically, a single $V$ cycle of algebraic multigrid 
	is used for $\mathcal{B}_1$ while for $\mathcal{B}_2$ the action is computed
	exactly by a direct solver. The final block of the preconditioners is realized by the 
	proposed multilevel preconditioner with different number of levels $J=2, 3, 4$. 
	We note that with the choice of discretization for the space $W_2$ the fractional 
	multigrid algorithm is applied outside of the setting used in the 
	analysis of Section \ref{sec:discrete-bpx}.
	
	The number of MinRes iterations is shown in Table \ref{tab:hdiv} and
	Table \ref{tab:mortar}. Here, the iterations were started from a random initial
	vector and terminated once the relative preconditioned residual norm was less then
	$10^{-8}$ in magnitude. For both $\mathcal{B}_1$ and $\mathcal{B}_2$ the iterations
	are bounded in the discretization parameter. The linear dependence on the
	number of levels is nicely visible in the results of $\mathcal{B}_2$.
	The tables further list iteration counts for preconditioners where the fractional operators
	were realized in terms of spectral decomposition. As expected from the theory
	and experiments for the Laplace problem the difference in iteration counts
	between the multilevel realization and specral realization is larger for
	$\mathcal{B}_1$ then it is for $\mathcal{B}_2$.





	\begin{table}
		\begin{tabular}{c|c|c||ccc|c}
			\hline
			\multirow{2}{*}{$h$} & \multirow{2}{*}{$\dim W_h$} & \multirow{2}{*}{\#cells$_{\Gamma}$} & \multicolumn{3}{c|}{\#MG} & \multirow{2}{*}{\#Eig}\\
			\cline{4-6}
			& & & $J=2$ & $J=3$ & $J=4$ & \\
			\hline
			2.21E-02 & 20736 & 128 & 27 & 28 & 27 & 22\\
			1.10E-02 & 82432 & 256 & 27 & 32 & 32 & 22\\
			5.52E-03 & 328704 & 512 & 27 & 33 & 36 & 22\\
			2.76E-03 & 1312768 & 1024 & 27 & 33 & 40 & 22\\
			1.38E-03 & 5246976 & 2048 & 25 & 35 & 40 & 22\\
			\hline
		\end{tabular}
		\caption{Number of MinRes itarations for the operator $B_2\mathcal{A}_2$
			and $\epsilon=10^{-15}$ using multilevel algorithm with $J$ levels as a preconditioner for
			$(-\Delta + I)^{1/2}$. Realizing the fractional operator with spectral
			decomposition leads to iteration counts in the last column.
		}
		\label{tab:hdiv}
	\end{table}





\begin{table}
	\begin{tabular}{c|c|c||ccc|c}
		\hline
		\multirow{2}{*}{$h$} & \multirow{2}{*}{$\dim W_h$} & \multirow{2}{*}{\#cells$_{\Gamma}$} & \multicolumn{3}{c|}{\#MG} & \multirow{2}{*}{\#Eig}\\
		\cline{4-6}
		& & & $J=2$ & $J=3$ & $J=4$ & \\
		\hline
		2.21E-02 & 4481 & 128 & 67 & 93 & 103 & 36\\
		1.10E-02 & 17153 & 256 & 68 & 92 & 111 & 35\\
		5.52E-03 & 67073 & 512 & 66 & 90 & 112 & 35\\
		2.76E-03 & 265217 & 1024 & 64 & 90 & 112 & 34\\
		1.38E-03 & 1054721 & 2048 & 64 & 88 & 108 & 33\\
		\hline
	\end{tabular}
	\caption{Number of MinRes itarations for the operator $B_1\mathcal{A}_1$
		and $\epsilon=10^{15}$ using multilevel algorithm with $J$ levels as a preconditioner for
		$(-\Delta + I)^{-1/2}$. Realizing the fractional operator with spectral
		decomposition leads to iteration counts in the last column.
	}
	\label{tab:mortar}
\end{table}

	\newpage
	\clearpage
	\bibliographystyle{abbrv}
	\vspace{.125in}
	\bibliography{fractional_mg}
	
\end{document}